
\documentclass[12pt]{article}
\usepackage{verbatim}
\usepackage{amssymb,amsmath,amsthm}
\usepackage{graphicx}
\usepackage{epsfig}
\setlength{\textwidth}{6.50in}
\setlength{\oddsidemargin}{0in}
\setlength{\evensidemargin}{0in}
\setlength{\textheight}{8.5in}
\setlength{\topmargin}{-.25in}

\newtheorem{corollary}{Corollary}[section]

\newtheorem{proposition}[corollary]{Proposition}
\newtheorem{conjecture}[corollary]{Conjecture}
\newtheorem{theorem}[corollary]{Theorem}
\newcommand{\Prob} {{\mathbb P}}
\newcommand{\Z}{{\mathbb Z}} 
\newcommand{\E}{{\mathbb E}}

\newcommand{\C}{{\mathbb C}}

\newcommand{\dist}{{\rm dist}}

\def \Im {{\rm Im}}
\def \Re {{\rm Re}}
\def \p {\partial}

\def \Half {{\mathbb H}}

\def \Disk {{\mathbb D}}

\def \saws {{\mathcal W}}
\def \paths {{\mathcal K}}

\def \looper {{\cal J}}

\def \sdom {{\mathcal S}}

\def \tmass {{\Psi}}

\begin{document}

\title{\sc{The probability that planar loop-erased random walk uses a given edge}     } 
\author{\sc{ 
Gregory F. Lawler} \footnote{Research supported by NSF grant DMS-0907143.}}
\date{\textit{University of Chicago}}
\maketitle

\begin{abstract}
We give a new proof of a result of Rick Kenyon that the probability that
an edge in the middle of an $n \times n$ square is used in a loop-erased
walk connecting opposites sides is of order $n^{-3/4}$.  We, in fact, improve
the result by showing that this estimate is correct up to  
multiplicative constants. 
\end{abstract}

\section{Introduction}

Loop-erased random walk is a process obtained by
erasing loops from simple random walk. Although the
process can be defined for arbitrary Markov chains, we
will discuss the process only on the planar
integer lattice $\Z^2 = \Z + i \Z$.  We start this
paper by stating our main result.

Let 
\[    A_n = \{j+ik \in \Z + i \Z: -n + 1 < j < n,
  - n < k < n\} ,\]
  \[   \p A_n = \{z\in \Z^2: \dist(z,A_n) = 1 )\}.\]  
Let $\paths_n$ denote the set of nearest neighbor
paths
$  \omega = [\omega_0,\ldots,\omega_k] $
with $\Re[\omega_0] = -n, \Re[\omega_k]  = n+1$ and $\{\omega_1,\ldots,
\omega_{k-1}\} \subset A_n$. We write $|\omega| = k$ for the
number of steps, and let $p(\omega) = 4^{-|\omega|}$ be
the simple random walk measure.  Let
\[         f(n)  = \sum_{\omega \in \paths_n} p(\omega) . \]
 It is known that $\lim_{n \rightarrow
 \infty} f(n) =  c_1  \in (0,\infty)$
(see, e.g., \cite[Proposition 8.1.3]{LLimic}), where the constant $c_1$ 
can be given in terms of the Green's function of Brownian motion
 on a domain bounded by a square.

Let $\saws_n$ denote the set of self-avoiding walks (SAWs)
$\eta = [\eta_0,\ldots,\eta_k] \in \paths_n$.
For each $\omega \in \paths_n$ there is a
unique path $L(\omega)
\in \saws_n$ obtained by chronological loop-erasing (see
\cite[Chapter 9]{LLimic} for appropriate definitions).  The
loop-erased measure $\hat p_n(\eta)$ is defined by
\[              \hat p_n(\eta) = \sum_{\omega \in 
\paths_n, L(\omega) = \eta } \; p(\omega) . \]
Note that 
\[                \sum_{\eta \in \saws_n}
  \hat p_n(\eta) = f(n)  . \] 
   Let $\saws_n^+$ denote the
set of $\eta \in \saws_n$ that contain the directed edge
$[0,1]$ and $\saws_n^-$ those that contain $[1,0]$.  Let
$\saws_n^* = \saws_n^- \cup \saws_n^+$ be the set of
$\eta \in \saws_n$ that contain the edge $[0,1]$ in
either direction.  We write $a_n \asymp b_n$
to mean that $a_n/b_n$ and $b_n/a_n$ are uniformly
bounded.   The goal of this paper is to prove
the following theorem.

\begin{theorem} As $n \rightarrow \infty$, 
\begin{equation}  \label{nov20.1}
  \sum_{\eta \in \saws_n^*} \hat p_n(\eta) \asymp n^{-3/4} .
  \end{equation}
             \end{theorem}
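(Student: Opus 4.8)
The plan is to show that, up to multiplicative constants, the left-hand side of \eqref{nov20.1} equals the probability that the loop-erased walk crossing $A_n$ from left to right passes through the single vertex $0$, and then to pin down that ``one-point function'' by relating it to the known $n^{5/4}$ growth exponent of planar loop-erased walk through a quasi-multiplicativity argument.

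\emph{Step 1 (reduce the edge to the vertex $0$).} Since $f(n)\to c_1\in(0,\infty)$, it suffices to bound $\Prob_n\{[0,1]\text{ is used}\}:=f(n)^{-1}\sum_{\eta\in\saws_n^*}\hat p_n(\eta)$. Bounding this above by $\Prob_n\{0\in\eta\}$ is immediate. For the reverse inequality I would use that chronological loop-erasure is reversible, together with a separation/localization lemma near $0$: conditioning on the part of the loop-erased crossing up to the last surviving visit to a small disk about $0$ and on the part after the first such visit, the loop-erased path must cross that disk, and by the separation lemma it does so through the edge $[0,1]$ (in one of its two orientations) with probability bounded below. Hence $\sum_{\eta\in\saws_n^*}\hat p_n(\eta)\asymp \sum_{\eta\in\saws_n,\,0\in\eta}\hat p_n(\eta)=:F_n$.

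\emph{Step 2 (a random-walk formula for $F_n$).} Decomposing $\omega\in\paths_n$ at the last visit to $0$ of the portion of $\omega$ that survives loop-erasure, and applying the loop-measure (Fomin-type) identity, one can write $F_n$ as a sum over pairs of self-avoiding paths $\beta^-$ from the left side to $0$ and $\beta^+$ from $0$ to the right side that meet only at $0$, with weight $p(\beta^-)\,p(\beta^+)$ times an $\exp[m_{A_n}(\text{loops shared between }\beta^-\text{ and }\beta^+)]$-type correction factor, where $m_{A_n}$ is the rooted loop measure in $A_n$. Reorganizing, this identifies $F_n$, up to factors that are bounded above and below (contributed by the Green's function $G_{A_n}$ and by the loop term), with the probability $q_n$ that two independent simple random walks started at $0$ and run to the left and right sides of $A_n$ have loop-erasures that meet only at $0$.

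\emph{Step 3 (estimate $q_n\asymp n^{-3/4}$) and the main obstacle.} Summing the analogue of $F_n$ over all vertices $z$ in a dyadic shell $\{|z|\asymp 2^k\}$, $2^k\le n$, gives the expected number of vertices of the crossing loop-erased walk in that shell, which is $\asymp 2^{5k/4}$ by the sharp two-dimensional loop-erased walk growth estimate (the exponent $5/4$ from Lawler--Schramm--Werner, together with its up-to-constants version). Since there are $\asymp 2^{2k}$ such vertices, the average $F$-mass per vertex at scale $2^k$ is $\asymp 2^{-3k/4}$; taking $2^k\asymp n$ and upgrading this average to the pointwise value at the center gives $q_n\asymp n^{-3/4}$, hence the theorem. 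The technical heart — and the step I expect to be the main obstacle — is precisely this upgrade: proving quasi-multiplicativity of the non-intersection quantity $q_n$, i.e., that the relevant events at distinct dyadic scales decouple up to constants. This requires separation lemmas controlling how the loop-erased path and its companion walk approach one another at each scale so that conditioning leaves sufficient independent randomness; the $n^{5/4}$ growth estimate and the reversibility of loop-erasure are then used essentially as black boxes.
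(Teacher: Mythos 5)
Your proposal takes a completely different route from the paper, and it contains a genuine gap that makes it circular. The paper never touches separation lemmas, non-intersection probabilities, or the growth exponent: it computes the signed sum $\Lambda_n=\sum_{\omega\in\paths_n}(-1)^{J(\omega)}p(\omega)Y(\omega)$ in two ways to get the exact identity $4\sum_{\eta\in\saws_n^*}\hat p_n(\eta)=Q_{01}(A_n)\,R_n^2\,\exp\{2m(\looper_{A_n})\}$, and then extracts the exponent from $m(\looper_{A_n})=\frac18\log n+O(1)$, proved by comparison with the Brownian loop measure of odd-winding loops. The only inputs are classical random walk estimates.

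The gap in your argument is in Step 3. The quantity you invoke as a black box --- that the expected number of vertices of the crossing loop-erased walk at scale $n$ is $\asymp n^{5/4}$ \emph{up to multiplicative constants} --- is not available. Lawler--Schramm--Werner proved convergence to $SLE_2$, not the growth exponent; Masson proved the growth exponent only in the logarithmic sense ($\approx n^{5/4}$, not $\asymp n^{5/4}$). Moreover, the up-to-constants expected length is just the sum over vertices of the one-point function, so it is essentially \emph{equivalent} to the statement you are trying to prove once you grant the quasi-multiplicativity/separation machinery that you yourself identify as the technical heart and defer. In other words, you are deducing the pointwise estimate from its spatial average while assuming the average up to constants, which is the same strength of information. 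With the inputs that actually exist independently of this theorem, your scheme yields only Kenyon's weaker conclusion \eqref{kenyonresult}, i.e.\ $\sum_{\eta\in\saws_n^*}\hat p_n(\eta)\approx n^{-3/4}$, and not \eqref{nov20.1}. (Steps 1 and 2 are plausible but also nontrivial; the reduction from the edge $\{0,1\}$ to the vertex $0$ and the identification of $F_n$ with a non-intersection probability each require separation estimates that would need to be proved, not just cited. But the decisive problem is the circular reliance on a sharp growth estimate in Step 3.)
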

             
With
 a little more work, we could establish the existence of
the limit
\[   \lim_{n \rightarrow \infty} n^{3/4}   \sum_{\eta \in \saws_n^*} \hat p_n(\eta),\]
but we will not do it here. (Our argument would not give
the value of the limit.  While we believe we might
be able to some of the relevant
asymptotic  constants, we definitely do not
know how to compute the value of the limit in \eqref{nov20.2} below.)    Our result can be considered a
strengthening of a result of 
  Kenyon  \cite{Kenyon}  who proved that
\begin{equation} \label{kenyonresult}  
\sum_{\eta \in \saws_n^*} \hat p_n(\eta)     
\approx    n^{-3/4}  , 
\end{equation}
 where $\approx$ indicates that the logarithms of both
 sides are asymptotic.  His proof used the relationship between
 loop-erased walks and two other models, dimers and uniform
 spanning trees.   Another proof of \eqref{kenyonresult}
 was given by Masson \cite{Masson} using the relationship
 between loop-erased walk and the Schramm-Loewner evolution
 ($SLE$).   
 We do not need to make   reference to any of these
 models in our proof of \eqref{nov20.1}.
 There are two main steps.
 \begin{itemize}
 \item  A combinatorial identity is proved which writes the left-hand
 side of \eqref{nov20.1} in terms of simple random walk quantities.
 \item  The simple random walk quantities are estimated.  
 \end{itemize}

   Our computation to obtain the exponent $3/4$
 uses the Brownian loop measure to estimate the random walk
 loop measure.  This is in the spirit of Kenyon's calculations \cite{Kenyon} since
 the 
 loop measure is closely related to the determinant of the
 Laplacian.

Although the proof is self-contained (other than some
estimates for  simple
random walk) it does use a key idea 
 from
  Kenyon's work  as discussed in \cite[Section 5.7]{KWilson}.
   For each random walk path $\omega$, we
 let $J(\omega)$ be the number of times that the path crosses
 an edge of the form $[-ki,-ki+1]$ or $[-ki+1,-ki]$ where
 $k$ is a positive integer.  Let $q(\omega) = (-1)^{J(\omega)} \, p(\omega)$.
Let $Y_{+}(\omega)$ denote the number of times that $\omega$
uses  the directed edge $[0,1]$, $Y_-(\omega)$ the number of times that
 $\omega$ uses the directed edge $[1,0]$, and
 $Y(\omega) = Y_+(\omega) - Y_-(\omega)$.
 The combinatorial identity is obtained by 
 writing the quantity
\begin{equation}  \label{Lambda}
          \Lambda_n = \sum_{\omega \in \paths_n}
    q(\omega) \, Y(\omega)
    = \sum_{\omega \in \paths_n}
    p(\omega) \,(-1)^{J(\omega) }\,  Y(\omega) . \end{equation}
   in two different ways.

The paper is written using the perspective of loop-erased walk in
terms of the random walk loop measure as in \cite[Chapter 9]{LLimic}.
We start by reviewing this perspective in Section \ref{loopsec} and then
we prove the identity in Section \ref{identsec}.  Section
\ref{rwloopsec}
discusses the random walk estimates.

After the main result is proved, we generalize the combinatorial
identity to random walk starting at any two points on $\p A_n$.
The argument is essentially the same, and one can compute the
dependence of the probability on the starting points.  A somewhat
analogous 
calculation was  done in \cite{Kenyon}, where the boundary was
fixed but the interior point was allowed to vary.
  In the last section, this
dependence on the boundary point is explained in terms of the scaling
limit of loop-erased random walk, the Schramm-Loewner evolution
with parameter $SLE_2$.  Here we do not give all details.

One of the main motivations for doing the estimates in this paper is
to show that the loop-erased random walk converges to $SLE_2$
in the {\em natural parametrization} \cite{LR,LS}.  
Up-to-constant estimates for the loop-erased walk probability can
be viewed as a
step in the program to establish this result.

\section{Random walk loop measure}  \label{loopsec}

The random walk loop measure is a measure on unrooted random
walk loops.   A rooted loop is a nearest neighbor path
\begin{equation}  \label{nov21.1}
 l = [l_0,l_1,\ldots,l_{2k}] 
 \end{equation}
with $k \geq 0$ and $l_0 = l_{2k}$.  We call $l_0$ the
root of the loop.  An unrooted loop $\bar{l}$
 is an
equivalence class of rooted loops with $k > 0$ under the equivalence
\[     [l_j,l_{j+1},\ldots,l_{2k},l_1,l_2,\ldots,l_j]
 \sim  [l_0,l_1,\ldots,l_{2k}]  \]
 for all $j$.  Note that the orientation of the loops is maintained.
 The random walk loop measure $m$ is defined
 by
 \[  m(\bar l) =   
 4^{-|\bar l|} \,\frac{d(\bar l)}{|\bar l|},\] where 
 $d(\bar l)$ is the number of rooted loops 
 in the equivalence class of
 the unrooted loop $\overline l$.  Note
 that $d(\bar l)$ is always
 an integer dividing $|\bar l|$.   In a slight abuse
 of notation, if $l$ is a loop
 and $A \subset \Z^2$, we write $l \subset A$ to mean
 that the vertices of $l$ are contained in $A$ and
 $l \cap A$ for the set of vertices in $A$ that $l$ visits.

 There  is an  equivalent way 
 of defining this measure that is sometimes
 useful. Enumerate $\Z^2 = \{v_1,v_2,\ldots,\}$
 and let $V_n = \{v_1,\ldots,v_n\}$.  We define
 a different measure on rooted loops by assigning
 to each 
(rooted)  loop as in \eqref{nov21.1} with $ k > 0,
 l \subset V_n ,$ and $l_0 = v_n$  measure $s^{-1} \, 4^{-2k}$
 where $s = \#\{j; 1 \leq j \leq 2k, l_j = v_n\}$. This induces
  a measure on unrooted loops by summing over rooted loops
 that generate an unrooted loop.  (The factor $s^{-1}$ compensates
 for the fact that several rooted loops give the same unrooted loop.) One can check that the induced measure on
 unrooted loops is the same as the loop measure above regardless of 
 which enumeration is chosen.
 For computations it is often convenient to choose
 an enumeration    in which
 $|v_j|$ is nondecreasing.
 
If $V  = \{v_1,\ldots,v_k\} \subset A \subsetneq \Z^2$,   we define
\[  F_V(A) =  \exp \left\{\sum_{\bar l \subset A,
\bar l\cap V \neq \emptyset } m(\bar l)\right\}
= \prod_{j=1}^k G_{U_j}(v_j,v_j) . \]
 Here $U_j = A \setminus \{v_1,\ldots,v_{j-1}\}$ and $G_U $
 denotes the usual random walk Green's function
 in the set $U$. 
  The loop-erased measure satisfies \cite[Proposition 9.5.1]{LLimic}
 \begin{equation}
 \label{jan19.1}
             \hat p_n(\eta)   = p(\eta) \, F_\eta(A_n) . 
             \end{equation}

We can also define a loop measure using the signed
weight $q(\omega) = (-1)^{J(\omega) } \, p(\omega) $.
The quantities $J(l), Y(l)$ as defined in the introduction are
functions of  the unrooted loop $\bar l$. (Note that
$Y(l)$ does depend on the orientation of $l$, so it is important
that we are considering oriented, unrooted loops.)
 Let $\looper_A$ denote
the set of unrooted loops $\bar l \subset A$ such
that $J(\bar l)$ is odd.  If $V \subset A$, let $\looper_{A,J}$ denote
the set of unrooted loops $\bar l \in \looper_A$ that
intersect $V$.
Let
\[   Q_V(A) = \exp\left\{\sum_{\bar l \subset A, \bar l
\cap V \neq \emptyset } m(\bar l) \, (-1)^{J(\bar l)}.
\right\} = F_V(A) \, \exp \left\{-2m(\looper_{A,J})
\right\}. \ 
\]
As in the case for $F$, if $V = \{v_1,\ldots,v_k\}
\subset A$, then
\begin{equation}  \label{jan19.2}
   Q_V(A) = \prod_{j=1}^k  g_{U_j}(v_j,v_j) ,
   \end{equation}
where $U_j = A \setminus \{v_1,\ldots,v_{j-1}\}$ and
\[    g_{U}(v_j,v_j) =  \sum_{l}q(l) = \sum_{l}
   (-1)^{J(l)} \, p(l)\]
where the sum is over all (rooted) loops $l$ from $v_j$ to $v_j$ staying
in $U$.   In particular, if $\eta \in \saws_n$, then
when the algebraic computation which gives \eqref{jan19.1} is  applied
to $q$, we get  
\[  \sum_{\omega \in \paths_n, L(\omega) = \eta}
       q(\omega)
        = q(\eta) \, Q_\eta(A_n) . \]
This implies that 
\[  \sum_{\omega \in \paths_n, L(\omega) = \eta}
     (-1)^{J(\omega) - J(\eta)} \, p(\omega)
        = p(\eta) \,Q_\eta(A_n) . \]

 If $V = \{0\}$ is a singleton set, then
\[  \lim_{n \rightarrow \infty} Q_V(A_n) = Q_V(\Z^2)
   =   \sum_{k=0}^\infty s^k =
   (1- s)^{-1} > 0. \]
Here $s = \E[J']$  where $J' = (-1)^{J(S[0,T_0])}$,
 $S$ is a simple random walk starting at the the origin,
and $T_0 = \min\{n \geq 1: S_n > 0\}$.  Since $\Prob\{J' = 1\}
> 0$ and $\Prob\{J' = - 1\} > 0$, we see 
that $|s| < 1$ and hence the limit exists and is positive.
A similar argument shows that if $\Z^2 \setminus U$ is
finite and non-empty,
 and $v$ is in the unbounded component of $U$, then
$g_U(v,v)$ is finite and strictly positive. 
Given this and \eqref{jan19.2}, 
it is straightforward to show that if $V$ is finite,
then
\begin{equation}  \label{nov25.1}
  Q_V = Q_V(\Z^2) = \lim_{n \rightarrow \infty}
   Q_V(A_n) 
   \end{equation}
 exists and is strictly positive.  We will write
 $Q_{01}(A_n)$ for $Q_{\{0,1\}}(A_n)$

For the important computation of the random walk loop
measure, we will use the Brownian loop measure as 
introduced in \cite{Lsoup}.   There are several equivalent
definitions.  We give one here that is convenient for computational
purposes and is analogous to the second definition
we gave for the random walk loop measure.  We start with the Brownian (boundary) bubble measure
in the upper half plane $\Half$ started at the origin.  It is
the limit as $\epsilon \downarrow 0$ of a measure on paths
from $i \epsilon$ to $0$ in $\Half$ which we now
describe.   For each $\epsilon$ consider
the measureof total mass $\epsilon^{-1}$  on paths whose normalized
probability measure is that of a Brownian  $h$-process to $0$.  (An 
$h$-process can be viewed roughly as a Brownian motion conditioned to
leave $\Half$ at $0$.)  As $\epsilon \downarrow 0$, the
limit measure is a $\sigma$-finite measure $\nu_\Half(0)$
 on loops from $0$
to $0$ otherwise in $\Half$.  The normalization is such that the measure
of bubbles that hit the unit circle equals one.  This definition
can be  extended to simply
connected domains with smooth boundaries either by the analogous definition
or by  the following conformal covariance rule:
 if $f: \Half \rightarrow D$ is a conformal
transformation, then
\[         f \circ \nu_\Half(0) = |f'(0)|^2 \,  \nu_D(f(0)) . \]
(In the definition of $f \circ \nu_\Half(0)$,
we need to modify the parametrization of the curve using
Brownian scaling, but the parametrization is not important in this paper.)
 
  Given the Brownian bubble measure, the Brownian loop measure restricted
 to curves in the unit disk $\Disk$ can be written as
\begin{equation}  \label{loopdefinition}      
 \frac 1 \pi \int_{0}^1 \int_{0}^{2 \pi} \nu_{r\Disk}(re^{i\theta}) \,r\, \, d\theta \, dr, 
 \end{equation}
 To be more precise, the loop measure is the measure on {\em unrooted} loops
 induced by the above measure on rooted loops.  (This representation of the
 measure on unrooted loops focuses on the rooted representative with root
 as far from the origin as possible.)
 The Brownian loop measure is the scaling limit of the random walk
 loop measure in a sense made precise in \cite{Jose}.  We discuss this
 more in Section \ref{rwloopsec}.

 \section{A combinatorial identity}  \label{identsec}
 
 Let $\paths_n'$ denote the set of nearest neighbor paths
 $\omega = [\omega_0,\omega_1,\ldots,\omega_k]$ with $
 \Re[\omega_0] = - n, \omega_k = 0$ and $\{\omega_1,\ldots,
 \omega_{k-1}\} \subset A_n \setminus [0,\infty)$.  Let
 $\paths_n''$ denote the set of nearest neighbor paths
 $\omega = [\omega_0,\omega_1,\ldots,\omega_k]$ with $
 \Re[\omega_0] = n+1, \omega_k = 1$ and $\{\omega_1,\ldots,
 \omega_{k-1}\} \subset  A_n \setminus (-\infty,1]$.  There
 is a natural bijection between $\paths_n'$ and
 $\paths_n''$ obtained by reflection about the 
 line $\{\Re(z) = 1/2\}$.  Let
 \[         R_n = \sum_{\omega \in \paths_n'} p(\omega)
    = \sum_{\omega \in \paths_n''} p(\omega).\]
    Note that 
    $R_n$ equals
$\Prob\{\Re(S_\tau) = -n\}$  where $S$ is a simple random walk 
starting at the origin and $\tau = \min\{j > 0: S_j \in \p A_n
\cup [0,\infty)\}.$  It is known 
(ee e.g., \cite[Proposition 5.3.2]{LLimic})
that
\begin{equation}  \label{nov25.2}
                 R_n  \asymp n^{-1/2} , \,\,\,\,  n \rightarrow
\infty .
\end{equation}

 The goal of this section is to
prove the following combinatorial identity which relates
the  probability   that loop-erased walk uses the 
undirected edge
$\{0,1\}$  to some simple random walk
quantities.

\begin{theorem} \label{combin}
\[         4\sum_{\eta \in \saws_n^*} 
    \hat p_n(\eta) 
    =    Q_{01}(A_n) \, R_n^2 
     \, \exp\left\{ 2 m(\looper_{A_n})\right\}.  
 \]
      \end{theorem}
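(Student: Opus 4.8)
The plan is to compute the signed, weighted edge–crossing number $\Lambda_n$ of \eqref{Lambda} in two ways and to compare the two expressions; the factor $4$ in the statement reflects that splitting the undirected edge $\{0,1\}$ into its two orientations removes one factor $p([0,1])=4^{-1}$ from the SAW weight $p(\eta)$.

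\medskip
\emph{The random walk side.} For $\omega\in\paths_n$, marking in turn each use of the directed edge $[0,1]$ and cutting $\omega$ there factors $\sum_{\omega\in\paths_n}q(\omega)Y_+(\omega)$ as $\tfrac14$ times the product of the signed mass $\sum_\gamma q(\gamma)$ over walks $\gamma$ from the left side of $\p A_n$ to $0$ inside $A_n$, and the signed mass over walks from $1$ to the right side. The reflection $z\mapsto 1-\bar z$ maps $A_n$ to itself, interchanges the two sides of $\p A_n$ and the two vertices $0,1$, and fixes the set of edges $[-ki,-ki+1]$, hence preserves $J$ and $q$; combined with path reversal (which also preserves $q$) it shows the two factors coincide, say both equal $P$. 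Doing the same with $[1,0]$ gives $\sum_\omega q(\omega)Y_-(\omega)=\tfrac14(P')^2$, with $P'$ the signed mass of walks from the left side to $1$, so $\Lambda_n=\tfrac14(P^2-(P')^2)$. Next decompose $P$ and $P'$ at the first visit to $\{0,1\}$: writing $g$ for the signed Green's function $g_{A_n}(0,0)=g_{A_n}(1,1)$ (equal by the same reflection) and $G$ for $g_{A_n}(0,1)=g_{A_n}(1,0)$ (equal by reversal), one obtains $P=h_0 g+h_1 G$ and $P'=h_1 g+h_0 G$, where $h_0$ (resp.\ $h_1$) is the signed mass of walks from the left side to $0$ (resp.\ to $1$) that avoid $\{0,1\}$ before the final step. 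Hence $\Lambda_n=\tfrac14(h_0^2-h_1^2)(g^2-G^2)$, and the last–exit identity for the signed Green's function gives $g^2-G^2=Q_{01}(A_n)$, so $\Lambda_n=\tfrac14\,Q_{01}(A_n)\,(h_0^2-h_1^2)$. It then remains to identify $h_0^2-h_1^2$ with $R_n^2\exp\{2m(\looper_{A_n})\}$: a sign–reversing involution (reflect in the real axis the tail of the walk after its last visit to the real axis) should cancel all walks meeting $[0,\infty)$, leaving walks confined to $A_n\setminus[0,\infty)$, on which one checks $J$ has fixed parity; tracking how the passage from $q$–weights to $p$–weights reinstates the odd–$J$ loops then produces the factors $R_n^2$ and $\exp\{2m(\looper_{A_n})\}$.

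\medskip
\emph{The loop-erased side.} Grouping $\omega\in\paths_n$ by $\eta=L(\omega)$ and using the decomposition of $\omega$ into $\eta$ together with the erased loops — under which $p$ and $(-1)^J$ are multiplicative while $Y$ is additive — we get $q(\omega)Y(\omega)=q(\eta)\bigl(Y(\eta)+\sum_i Y(l_i)\bigr)\prod_i q(l_i)$. Summing over the fibre over $\eta$: the $Y(\eta)$ term gives $q(\eta)Y(\eta)Q_\eta(A_n)$ by \eqref{jan19.2} and the identity $\sum_{L(\omega)=\eta}q(\omega)=q(\eta)Q_\eta(A_n)$, while the $\sum_i Y(l_i)$ term vanishes because reversing a loop's orientation is a weight– and $J$–preserving involution on the loops attached at a given vertex that negates $Y$. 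Since $Y(\eta)=\pm1$ on $\saws_n^{\pm}$ and $0$ on the rest of $\saws_n$,
\[
\Lambda_n=\sum_{\eta\in\saws_n^{+}}q(\eta)Q_\eta(A_n)-\sum_{\eta\in\saws_n^{-}}q(\eta)Q_\eta(A_n)=\sum_{\omega:\,L(\omega)\in\saws_n^{+}}q(\omega)-\sum_{\omega:\,L(\omega)\in\saws_n^{-}}q(\omega).
\]
Unwinding $\hat p_n(\eta)=p(\eta)F_\eta(A_n)$ and $Q_\eta(A_n)=F_\eta(A_n)\exp\{-2m(\looper_{A_n,\eta})\}$, and using a parity/topology lemma — describing, in terms of the direction in which $L(\omega)$ crosses $\{0,1\}$, how the sign $(-1)^{J(\omega)}$ and the odd–$J$ loops erased along $L(\omega)$ interact with the staircase of cut edges — one converts the right-hand side into (a constant multiple of) $\sum_{\eta\in\saws_n^{*}}\hat p_n(\eta)$.

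\medskip
Comparing the two evaluations of $\Lambda_n$ then gives the stated identity. I expect the sign bookkeeping to be the crux: on the random walk side, the involution argument turning $h_0^2-h_1^2$ into $R_n^2\exp\{2m(\looper_{A_n})\}$; and on the loop-erased side, the topological lemma governing the parity of $J$ and its interplay with $m(\looper_{A_n})$. Everything else is the straightforward algebra of the loop-erasure decomposition and of signed Green's functions.
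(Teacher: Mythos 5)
Your final identification on the random walk side is wrong, and the error is exactly at the crux you flag. The correct identity is $h_0^2-h_1^2=R_n^2$, with \emph{no} loop-measure factor. Indeed, following your own sketch: (i) $h_1=0$, because reflection about the real axis is a sign-reversing involution on walks from the left side to $1$ whose interior avoids $\{0,1\}$ --- such a walk crosses the line $\{\Re(z)=1/2\}$ an odd number of times and never via the edge $\{0,1\}$, so $J(\gamma)+J(\tilde\gamma)$ is odd; and (ii) $h_0=R_n$, because reflecting the tail after the \emph{first} visit to $[1,\infty)$ (not the last visit to the real axis, which is the terminal point $0$) cancels all walks whose interior meets $[1,\infty)$, and the survivors are exactly the walks of $\paths_n'$, on which $J$ is even, so the signed mass equals the unsigned mass $R_n$. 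There is no mechanism for a factor $\exp\{2m(\looper_{A_n})\}\asymp n^{1/4}$ to appear here: $h_0$ and $R_n$ are sums over the same finite set of walks, and passing from $q$-weights to $p$-weights is a statement about individual walks, not about attached loops. If your claimed identity held, then combined with the correct loop-erased evaluation $\Lambda_n=e^{-2m(\looper_{A_n})}\sum_{\eta\in\saws_n^*}\hat p_n(\eta)$ it would give $4\sum\hat p_n(\eta)=Q_{01}(A_n)\,R_n^2\,\exp\{4m(\looper_{A_n})\}$, which is off by $n^{1/4}$ and contradicts the main theorem. The factor $\exp\{2m(\looper_{A_n})\}$ comes entirely from the loop-erased side, via the topological fact that every loop in $A_n$ with odd $J$ intersects every $\eta\in\saws_n^*$, so that $Q_\eta(A_n)=F_\eta(A_n)\exp\{-2m(\looper_{A_n})\}$ with an $\eta$-independent exponent; this, together with $(-1)^{J(\eta)}Y(\eta)=1$ for $\eta\in\saws_n^*$ (a crosscut argument), is the ``parity/topology lemma'' you defer, and it is real content rather than bookkeeping.

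That said, your route to $\Lambda_n=\tfrac14\,Q_{01}(A_n)\,(h_0^2-h_1^2)$ is correct and genuinely different from the paper's. The paper performs a case analysis on the first and last visit times $T_0,T_1,T_0',T_1'$, kills three of the four cases by loop reversal and reflection, and reads the factorization off the surviving decomposition $\omega^-\oplus l_0\oplus[0,1]\oplus l_1\oplus\omega^+$. Your edge-marking argument, the first-visit decomposition $P=h_0g+h_1G$, $P'=h_1g+h_0G$, and the last-exit identity $g^2-G^2=g_{A_n}(0,0)\,g_{A_n\setminus\{0\}}(1,1)=Q_{01}(A_n)$ reach the same point more algebraically, at the cost of having to handle the two signed hitting masses $h_0,h_1$ separately (which, as above, is where the reflection arguments re-enter). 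Fix the final step to $h_0=R_n$, $h_1=0$ and supply the two topological lemmas, and the proof goes through.
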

      
 \begin{proof}

We claim the following:
\begin{equation}  \label{nov19.1}
(-1)^{J(\eta) }
    \,  Y(\eta) = 1  \;\;\mbox{ if }\;\; \eta \in \saws_n^*. 
\end{equation}
To see this consider the path $\eta$ as a continuous path
from $\{\Re(z) = -n\}$ to $\{\Re(z) = n+1\}$
in the domain
$D=\{x+iy \in \C: -n < x < n+1, -n < y < n\}$.
Then $\eta$ is a crosscut of $D$ such that $D \setminus \eta$
consists of two components, the ``top'' component $D^+$
and the ``bottom'' component $D^-$.  Each ordered edge
$[w, w']$ in $\eta$ can be considered as subsets of $\p D^+$
and $\p D^-$.  As we traverse from $w$ to $w'$, the left-hand
side of $[w, w']$
 (considered as a prime end) is in $\p D^+$ and the right-hand side is in $\p D^-$. 
  Let $N_+$
be the set of integers $k$ such that the ordered edge
$[ki,ki+1]$ is contained in $\eta$,    $N_-$
be the set of integers $k$ such that the ordered edge
$[ki+1,ki]$ is contained in $\eta$, and $N = N_+ \cup
N_-$.  We claim that if $j \in N_+$
and $k$ is the largest integer less than $j$ with $k \in N$,
then $k \in N_-$.  For otherwise, the open line segment from
$ji + (1/2)$ to $ki + (1/2)$ would be in both $D^+$ and
$D^-$.   We now consider the smallest $k$ such that
$k \in N$.   The line segment from $-ni + (1/2)$ to $ ki + (1/2)$ is
contained in $D^-$ and hence $k \in N_+$.    As we continue
up the line $\{\Re(z) = 1/2\}$ we see that when we intersect
edges in $\eta$, they alternate being in $N_+$ or $N_-$,
the first in $N_+$, the second in $N_-$, the third in $N_+$,
etc.  When we reach the unordered edge $\{0, 1\}$, we see that
if $0 \in N_+$, then there have been an even number of edges
before $\{0,1\}$ and if $0 \in N_-$, there have been
an odd number of edges.   In other words, $(-1)^{J(\eta)} = 1$
if $\eta \in \saws_n^+$ and $(-1)^{J(\eta)} = -1$ if
$\eta \in \saws_n^{-}$.  This gives \eqref{nov19.1}.

 Let $\Lambda_n$ be defined as in \eqref{Lambda}. We claim that 
\begin{equation}  \label{nov19.2}
 \Lambda_n = \sum_{\omega \in \paths_n}
      q(\omega) \, Y(L(\omega))
        = \sum_{\eta \in \saws_n^*} \sum_{L(\omega) = \eta}  p(\omega) \, (-1)^{J
        (\omega)- J(\eta)} . 
           \end{equation}
To see this, 
suppose that
$L(\omega) = \eta = [\eta_0,\ldots,\eta_k]$.  
We can write
\[ \omega =  [\eta_0,\eta_1] \oplus l_1 \oplus [\eta_1,\eta_2]
 \oplus l_2 \oplus \dots\oplus  [\eta_{k-2},\eta_{k-1}]
 \oplus l_{k-1} \oplus [\eta_{k-1},\eta_k],\]
where $l_j$ is a loop rooted at $\eta_j$ that does not enter
$\{\eta_1,\ldots,\eta_{j-1}\}$.  We write
\[     J(\omega) = J(\eta) + J_L(\omega) , \;\;\;\;
    Y(\omega) = Y(\eta) + Y_L(\omega) , \]
  where $J_L,Y_L$ denote the contributions  from  the loops. Then
\[   Y(\omega) = Y(\eta) + \sum_{j=1}^{k-1} Y(l_j).\]
For each loop $l_j$ there is the corresponding reversed loop
$l_j^R$ for which $Y(l_j^R) = - Y(l_j)$.  
 Since $J(l_j^R) = J(l_j)$ and $Y(l_j^R) = - Y(l_j)$,
 we get cancellation.  Doing this for all the loops, we see that
\[    \sum_{\omega \in \paths_n, L(\omega) = \eta} q(\omega) \, [Y(\omega)
 - Y(\eta)] = 0 . \]
 This gives the first equality in \eqref{nov19.2}.  The second equality
uses \eqref{nov19.1} and 
   the fact that $Y(\eta) = 0$
 if $\eta \not \in \saws_n^*$.

If  $\eta \in \saws_n^*$, then
\begin{eqnarray*}
\sum_{L(\omega) = \eta}   {p(\omega)} \, (-1)^{J(\omega)- J(\eta)} 
& = & p(\eta) \sum_{L(\omega) = \eta}  \frac{p(\omega)}{p(\eta)} \, (-1)^{J_L(\omega)} \\
 & = & p(\eta)\,
 Q_\eta(A_n) \\& = & p(\eta)\,\exp \left\{\sum_{\bar l \subset A_n,
 \bar l \cap \eta \neq \emptyset} (-1)^{J(\bar l)} \, m(\bar l) \right\}\\
 & = &p(\eta)\, F_\eta(A_n) \,\exp \left\{-2\sum_{\bar l \subset A_n,
 \bar l \cap \eta \neq \emptyset, J(\bar l) \mbox{ odd}}  m(\bar l) \right\}.
 \end{eqnarray*}
 If $J(\bar l)$ is odd, then $\bar l$ must include at least one
 unordered edge $\{ki, ki+1\}$ with $k \geq 0$ and at least
 one unordered edge $\{ki,ki+1\}$ with $k < 0$. 
Therefore, topological considerations imply that if $\eta \in \saws_n^*$,
   then $\eta 
  \cap \bar l  \neq \emptyset.$
  Hence
  \[  \sum_{\bar l \subset A_n,
 \bar l \cap \eta \neq \emptyset, J(\bar l) \mbox{ odd}}  m(\bar l)
  =  \sum_{\bar l \subset A_n,
  J(\bar l) \mbox{ odd}}  m(\bar l) =   { m(\looper_{A_n})}. \]
   Combining this with \eqref{nov19.2}, we see that
\begin{equation}  \label{nov19.4}
\Lambda_n = \sum_{\eta \in \saws_n^*} p(\eta) \, Q_\eta(A_n) 
      =   e^{-2m(\looper_{A_n})  }    \sum_{\eta \in \saws_n^*} p(\eta) 
             \, F_\eta(A_n) =   e^{-2m(\looper_{A_n})  }  
              \sum_{\eta  \in \saws_n^*}  \hat p_n(\eta) . 
              \end{equation}

We will now compute $\Lambda_n$  as defined
in \eqref{Lambda} a different way. 
Let $\omega = [\omega_0,\ldots,\omega_\tau]
\in \paths_n$. 
 If $\omega$ does not visit $0$ or $\omega$ 
does not visit $1$, then $Y(\omega) = 0$.  Hence,
we need only consider the sum over
 $\omega \in \paths_n$ that visit both $0$
and $1$.  
Let $T_0 = \min\{j: \omega_j = 0\}, T_0' = \max\{j < \tau:
\omega_j = 0\}$ and define $T_1,T_1'$ similarly.

Suppose that $T_0 < T_1, T_0' > T_1'$.  In this case we
write
\begin{equation}  \label{jan9.6}
 \omega = \omega^- \oplus l \oplus \omega^+, 
 \end{equation}
where $l$ is the loop $[\omega_{T_0},\ldots,\omega_{T_0'}]$.
 Note that $Y(\omega) = Y(l)$. 
For any such loop, there is the corresponding reversed loop 
$l^R = [\omega_{T_0'}, \omega_{T_0' - 1},\ldots,
\omega_{T_0}]$ for which $Y(l^R) = - Y(l)$.  These terms cancel
and hence the sum in \eqref{Lambda} over $\omega$
with $T_0 < T_1, T_0' > T_1'$ is zero.  Similarly, the
sum over $\omega$ with  $T_1 < T_0 \leq
T_0' < T_1'$ is zero.

Suppose that $T_0 > T_1, T_0' > T_1'$.  Then
we can write 
$\omega$ as
\[  \omega =
 \omega^- \oplus l_1 \oplus \omega' \oplus  l_0 \oplus \omega^+. \]
Here $l_0$ is a loop rooted at $0$, $l_1$ is a loop 
rooted at $1$, $\omega'$ is a path from $1$
to $0$,  $\omega^-$ is a path from $\{\Re(z) = -n\}$ to $1$ avoiding
$0$,
$\omega^+$ is a path from $0$ to $\{\Re(z) = n+1\}$ avoiding $1$.  Let
$\tilde \omega^-$ be the reflection of $\omega^-$
about the real axis.  Then $J(\omega^-) + J(\tilde \omega^-)
$ is odd and these terms will cancel.  Hence the sum over
all $\omega$ with  $T_0 > T_1, T_0' > T_1'$
is zero.

 Let $ \paths_n^1$ be the set of paths in $\paths_n$
that visit both $0$ and $1$ and satisfy
 $T_0 < T_1, T_0' < T_1'$.  We have shown that
 \[   \Lambda_n = \sum_{\omega \in \paths_n^1}  
  q(\omega) \, Y(\omega) . \] 
 If $\omega \in \paths_n^1$, let
$\rho = \min\{j > T_0':  \omega_j = 1 \}$.  Then we can write
$\omega$ as
\begin{equation}  \label{nov19.5}
  \omega =
  \omega^- \oplus l_0 \oplus \omega' \oplus  l_1 \oplus \omega^+. 
 \end{equation}
Here $l_0$ is a loop rooted at $0$, $l_1$ is a loop 
rooted at $1$, $\omega' =[\omega_{T_0'},
\ldots,\omega_\rho]$ is a path from $0$ to
$1$,  $\omega^-$ is a path from $\{\Re(z) = -n\}$ to $0$,
$\omega^+$ is a path from $1$ to $\{\Re(z) = n+1\}$.  The paths $\omega',
\omega^-,\omega^+$ do not enter $\{0,1\}$ except at their
endpoints.  The loop $l_1$ does not visit $0$.  All points other
than the endpoints must lie in $A_n$.  These are all the
  restrictions on the paths.   Note that $Y(\omega) = Y(l_0) +
  Y(\omega')$. As in the previous arguments, we can replace
  $l_0$ with the reversed loop  $l_0^R$, to see that
  \[  \sum_{\omega \in \paths_n^1}
   (-1)^{J(\omega)} \, Y(l_0) \, p(\omega)  =0.\]
Also $Y(\omega')  \in \{0,1\}$ with $Y(\omega') = 1$ if and
only if $T_0'+ 1 = \rho$, that is, if $\omega' = [0,1]$.
Therefore, if $\paths_n^2$ denotes the set of paths in $\paths_n^1$
with $\omega' = [0,1]$, then  
\begin{equation}  \label{nov20.4}
 \Lambda_n
 = \sum_{\omega \in  \paths_n^2}
          (-1)^{J(\omega )}\, p(\omega)
      =  \sum_{\omega \in  \paths_n^2}
          (-1)^{J(\omega^-) +J(l_0) + J(l_1)
             + J(\omega^+)}\, p(\omega). 
             \end{equation}

If $\omega \in \paths_n^2$, let $\xi$ be the smallest $j$ such that $\omega_j$ is
on the positive real axis.  Suppose for the moment that
$\xi < T_0$.  Then we can write
\[   \omega^- = \omega^{-,1} \oplus \omega^{-,2}, \]
by splitting the path at time $\xi$.  The path $\omega^{-,2}$
is a path from the positive real axis to $0$ that does
not go through the point $1$.  Hence, 
$J(\omega^{-,2}) + J(\tilde \omega^{-,2})$ is odd, where
$\tilde \omega^{-,2}$ is the reflection of
$\omega^{-,2}$ about the
real axis.  These terms will cancel in the sum \eqref{nov20.4}, and hence
it suffices to sum over $\omega^-$ such that $\omega^-
\cap [1,\infty) = \emptyset$.  For these $\omega^-$,
we can see by topological reasons that
 $(-1)^{J(\omega^-)} = 1$.    By a similar argument,
it suffices to sum over $\omega^+$ satisfying
$\omega^+ \cap (-\infty,0] = \emptyset$, 
and for these walks $(-1)^{J(\omega^+)} = 1$. Therefore, if   
$\paths_n^3$ denote the set of paths in $\paths_n^2$ satisfying
 \[  \omega^- \cap [1,\infty) = \emptyset, \;\;\;
 \omega^+ \cap (-\infty,0] = \emptyset, \]
we see that
\[ \Lambda_n =\sum_{\omega \in  \paths_n^3}
          (-1)^{ J(l_0) + J(l_1)
             }\, p(\omega). \]

Let us write any $\omega \in \paths_n^3$ as in \eqref{nov19.5}. 
We must choose $\omega^- \in \paths_n', (\omega^+)^R \in \paths_n''$
and $\omega' = [0,1]$.  Summing over all of these possibilities,
gives a factor of $R_n^2/4$. 
The choices of $l_0,l_1$ are independent of the choices of
$\omega^-$ and $\omega^+$.  The only restriction is that
the loops lie in $A_n$ and $l_1$ does not contain the origin.  By our definition,
\[    \sum_{l_0,l_1} (-1)^{J(l_0) + J(l_1)} \, p(l_0) \,
p(l_1)
   = g_{A_n}(0,0) \, g_{A_n\setminus \{0\}}(1,1) = Q_{01}
    (A_n) . \]
Therefore, 
\[     \Lambda_n =\sum_{\omega \in  \paths_n^3}
          (-1)^{ J(l_0) + J(l_1)
             }\, p(\omega) = \frac 14 \, R_n^2 \, Q_{01}(A_n) . \]
 Comparing this with \eqref{nov19.4} gives the theorem.
\end{proof}

 \section{Estimate on the random walk loop measure} \label{rwloopsec}

Using Theorem \ref{combin} and the estimates
\eqref{nov25.1} and \eqref{nov25.2},
  we see that
\[    \sum_{\eta \in \saws_n^*} \hat p_n(\eta) 
  \asymp n^{-1} \, \exp \left\{2 m(\looper_{A_n})
  \right\}. \] The proof of  \eqref{nov20.1} 
  is finished with the following proposition. 

%

\begin{proposition}
There exists $c < \infty$ such that for all $n$,
\[         \left|  m(\looper_{A_n}) - \frac {1} 8 
 \, \log n\right| \leq c . \]
    \end{proposition}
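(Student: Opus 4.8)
The plan is to compute $m(\looper_{A_n})$, the total mass of random walk loops in $A_n$ with an odd winding-type parity $J$, by comparing it to the analogous Brownian quantity and extracting the logarithmic divergence from the scaling limit. Recall $J(\bar l)$ counts crossings of the edges $[-ki,-ki+1]$ and $[-ki+1,-ki]$ for $k\geq 1$, so $(-1)^{J(\bar l)}=-1$ precisely for loops that ``separate'' the negative imaginary axis from infinity in a suitable topological sense; more precisely, as noted in the proof of Theorem \ref{combin}, a loop with $J$ odd must use at least one edge $\{ki,ki+1\}$ with $k\geq 0$ and one with $k<0$, hence must wind around (or at least surround) the point $1/2$ in the complementary picture. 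The guiding idea is that $m(\looper_{A_n})$ is, up to $O(1)$, the Brownian loop measure of loops in a disk of radius $\asymp n$ that disconnect $0$ (or a fixed bounded segment near the origin) from the outer boundary, and such disconnection events for the Brownian loop measure have logarithmic mass whose coefficient is dictated by the known behavior of the loop measure under restriction / the conformal restriction exponents.

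First I would set up the decomposition $m(\looper_{A_n}) = \sum_{\bar l \subset A_n, J(\bar l)\text{ odd}} m(\bar l)$ and partition the loops according to their diameter (or according to the dyadic scale of their distance from the origin together with their size). Small loops near the origin — those of diameter $O(1)$ — contribute a bounded amount and can be absorbed into the constant $c$; loops that stay far from the slit or are too small to cross edges on both sides of $0$ have $J$ even and contribute nothing. The main contribution comes from loops of diameter between a constant and $\asymp n$ that encircle the origin. For each dyadic scale $2^j$ with $1 \le 2^j \le n$, I would estimate the random walk loop measure of $\{\bar l\subset A_n : J(\bar l)\text{ odd}, \diam(\bar l)\asymp 2^j\}$ by invoking the convergence of the rescaled random walk loop measure to the Brownian loop measure \cite{Jose}, with error terms summable over scales. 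The Brownian loop measure of loops at scale $2^j$ that disconnect $0$ from $\partial A_n$ (equivalently, from a circle of radius $\asymp n$) is, by scale invariance of the Brownian loop measure and conformal covariance, a constant independent of $j$ in the bulk of the range — call it $\beta$ — so summing over $\log_2 n$ scales gives $\beta \log_2 n + O(1)$, and matching $\beta \log_2 n = \tfrac18\log n$ forces $\beta = \tfrac{1}{8}\log 2$; I would instead compute $\beta$ directly from the known formula for the Brownian loop measure of loops disconnecting two boundary points / a point from a circle, which is $\tfrac{1}{12}$-type data but here gives the coefficient $1/8$ via the relevant winding/disconnection computation.

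More concretely, the cleanest route is: (i) express $\sum_{\bar l\subset A_n, J\text{ odd}} m(\bar l)$ as $\tfrac12\big[\sum_{\bar l\subset A_n} m(\bar l) - \sum_{\bar l\subset A_n}(-1)^{J(\bar l)} m(\bar l)\big]$; the first sum is $\log\det$-type and known precisely, but we don't need it since it will be handled through the $Q_{01}$ already used — actually better to work directly; (ii) relate the odd-$J$ loop mass to a Brownian disconnection mass using the strong approximation results for loop measures, controlling the discrepancy on each scale by $O(2^{-\delta j})$ for some $\delta>0$ so the sum of errors is $O(1)$; (iii) evaluate the Brownian loop measure of loops in an annulus $\{c \le |z| \le n\}$ (say) that wind around $0$ with odd parity. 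The Brownian loop measure of loops in the plane that disconnect $0$ from $\infty$ and lie in $\{r_1\le |z|\le r_2\}$ equals $\tfrac{1}{5}$... no: the correct normalization from \cite{Lsoup} and the relation to the central charge gives that the Brownian loop measure of loops separating the two boundary components of the annulus $A_{r}=\{1<|z|<r\}$ is $-\tfrac{1}{12}\log$ of a modular quantity whose leading term is $(1/12)\cdot(\text{something})\log r$; I would pin down the exact coefficient so that it equals $(1/8)\log r$ for the odd-$J$ subfamily.

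**The main obstacle** I anticipate is step (iii): correctly identifying which Brownian loops correspond to odd $J$ and computing the exact constant $1/8$. The parity $J$ is not literally ``winds around $0$'' but ``crosses the slit $(-i\infty,0]$-edges an odd number of times,'' which in the continuum becomes a statement about the loop's winding (or its homotopy class relative to the slit plane $\C\setminus(-i\infty,0]$). Unwinding this via a conformal map that opens the slit (e.g. $z\mapsto \sqrt{z}$-type) and then using the conformal covariance of the Brownian bubble/loop measure should convert the odd-parity condition into an honest disconnection event whose loop measure is computed by the known formula for the loop measure of loops disconnecting two points, yielding the coefficient $1/8$. I would be careful that (a) the convergence in \cite{Jose} is strong enough uniformly across all the dyadic scales from $O(1)$ up to $n$, requiring a slightly quantitative version or a separate comparison for loops very close to $0$ and very close to $\partial A_n$ (the boundary of $A_n$ being a square, not a disk, is a mild conformal nuisance handled by comparing $A_n$ to inscribed and circumscribed disks), and (b) the error from truncating at diameter $\asymp n$ versus the true shape of $A_n$ is $O(1)$. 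All of these are standard-type estimates; the genuinely new computation is pinning the constant to $1/8$.
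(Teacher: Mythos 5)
Your overall strategy---decompose by scale, compare the random walk loop measure to the Brownian loop measure via the strong coupling of \cite{Jose}, and identify the per-scale continuum constant---is the same skeleton as the paper's proof (which uses the annuli $C_{n+1}\setminus C_n$ with $C_n=\{|z|<e^n\}$ rather than dyadic diameter classes; this is a cosmetic difference). However, there are two genuine gaps. The first and most serious is that you never actually compute the constant $1/8$, and the route you gesture at would compute the wrong quantity. The continuum event corresponding to ``$J(\bar l)$ odd'' is \emph{odd winding number} of the loop about the origin, not disconnection of the origin from the boundary: a loop winding twice disconnects but has even $J$. Consequently the annulus-separation / central-charge / ``$\tfrac1{12}\log(\text{modulus})$'' formulas you invoke are not the relevant tools. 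The paper's computation is elementary and direct: by rotational symmetry and conformal covariance the per-scale mass reduces to $2\phi$, where $\phi$ is the Brownian bubble measure at $1\in\p\Disk$ of bubbles with odd winding number about $0$; the covering map $z\mapsto i\log z$ converts this into the half-plane boundary Poisson kernel summed over odd sheets, $\phi=\sum_{k\ \mathrm{odd}}(2\pi k)^{-2}$, whence $2\phi=\frac1{\pi^2}\bigl(1+\frac1{3^2}+\frac1{5^2}+\cdots\bigr)=\frac18$. Your suggestion of a $\sqrt{z}$-type unfolding points in a vaguely similar direction but is not carried out, and without this calculation the statement is not proved.

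The second gap is in the coupling step. The difficulty is not merely uniformity of the convergence of \cite{Jose} across scales or the square-versus-disk boundary: it is that for a loop passing very close to the origin, the coupled discrete and continuous loops can have \emph{different winding parities}, since an $e^{-\alpha n}$ perturbation near $0$ can change the winding number. The paper isolates the loops that come within distance $e^{-\beta n}e^{n+1}$ of the origin (a set of loop measure $\asymp (\beta n)^{-1}$, so not negligible at the $O(n^{-2})$ error scale) and shows, by a separate annulus-by-annulus coupling, that such loops have odd parity with conditional probability $\frac12+O(e^{-un})$ in both the discrete and continuous models, matching the continuum identity $\mu[\tilde\looper\cap\tilde\looper_s^*]=\frac12\mu[\tilde\looper_s^*](1+O(s^{-1}))$. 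Your write-up does not identify this parity-instability issue, and without an argument of this type the per-scale error cannot be driven down to something summable.
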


\begin{proof}   
    Let $C_n = \{z \in \Z^2: |z| < e^n\}. $ We will prove the
    stronger fact that the limit 
\begin{equation}  \label{nov20.2}
       \lim_{n \rightarrow \infty} \left[ m(\looper_{C_n}) - \frac n  8    
    \right]
    \end{equation}
    exists 
  by showing that
\begin{equation}  \label{nov21.7}
 \sum_{n=1}^\infty \left|  m(\looper_{C_{n+1}}\setminus
     \looper_{C_{n}}) - \frac 1 8 \right| < \infty . 
     \end{equation}
     
     Let $\mu$ denote the Brownian loop measure,
      and let
     $\tilde \looper$ denote the set of unrooted  Brownian
     loops $\gamma$ in
     the unit disk that intersect $\{|z| \geq e^{-1}\}$ and such
     that the winding number of $\gamma$ about the origin is odd.
We will establish \eqref{nov21.7}
 by showing  that $\mu(\tilde \looper) = 1/8$ and
   \begin{equation}  \label{nov20.3}
  \left| m(\looper_{C_{n+1}}\setminus
     \looper_{C_{n}}) -\mu(\tilde \looper) \right|   = O(n^{-2}).
   \end{equation}

For the Brownian loop measure, we do a computation similar to that in \cite[Proposition 3.9]{Annulus}.
Using \eqref{loopdefinition},  we write
\[   \mu (\tilde \looper)=
\frac 1 \pi \int_{e^{-1}}^1 \int_{0}^{2 \pi} \phi(r,\theta) \, d\theta \,r\,  dr, \]
where $\phi(r,\theta)$ denotes the
    Brownian bubble measure of loops in
$r \Disk$  rooted at $r e^{i\theta}$ with odd winding number about
the origin.  Rotational symmetry implies that $\phi(r,\theta) = \phi(r,0)$
and conformal covariance implies that $\phi(r,0) = r^{-2}
\,\phi$
where $\phi = \phi(1,0)$.  Hence,
\begin{equation}  \label{jan7.1}
\mu (\tilde \looper)= \frac{\phi}\pi 
 \int_{e^{-1}}^1 \int_{0}^{2 \pi} r^{-2} \, d\theta \,r\,  dr = 2  \phi. 
 \end{equation}
%
By considering the (multi-valued) covering map $f(z) = i\log z $
which satisfies $|f'(1)| = 1$, we see that
\[\phi = \sum_{k \mbox{ odd}}  H_{\p \Half}(0, 2\pi k) , \]
where $H_{\p \Half}$ denotes the boundary Poisson kernel
in the upper half-plane $\Half$  normalized so
that $H_{\p \Half}(0,x) =x^{-2}$.  Therefore,
\[      2\phi   = 2 \sum_{ k=-\infty}
^ \infty  \frac{1}{[2\pi (2k+1)]^2} = \frac{1}{\pi^2} \,  \left[1
 + \frac 1{3^2} + \frac 1{5^2} + \cdots\right] = \frac 1
 {8} . \]

If $s > 2$, let  $\tilde  \looper _s^*$ denote the set of 
Brownian loops in $\Disk$ that intersect
both $\{|z| \geq e^{-1} \}$ and $\{|z| \leq e^{-s}\}$.   
We claim that as $s \rightarrow \infty$,
 \begin{equation}  \label{nov21.6.alt} 
       \mu(  \tilde \looper_s^*) =    s^{-1} + O(s^{-2}),  
\end{equation}
\begin{equation}  \label{nov21.6} 
     \mu [\tilde \looper \cap \tilde \looper_s^*]
     = (2s)^{-1}+ O(s^{-2} ) . 
    \end{equation}
To see this, we first consider the boundary bubble measure $\lambda_s$ of loops
in $\Disk$ rooted at $1$ that enter $\{|z| \leq e^{-s}\}$.  An exact
expression is given as follows.  Let $B_t$ be a Brownian motion
and $\sigma_s = \inf\{t: |B_t|= e^{-s}\}$.   Then, 
\[     \lambda_s = \lim_{\epsilon \downarrow 0} \E^{\epsilon}
  \left[H_\Disk(B_{\sigma_s},1); \sigma_s < \sigma_0 \right] . \]
The Poisson kernel in the disk is well known; for our purpose we need
only know that
\[    H_\Disk(z,1) = \frac 12 + O(|z|), \]
and a standard estimate for Brownian motion gives
 \[ \Prob^{1-\epsilon} \{\sigma_s < \sigma_0\} = \frac{\log (1-\epsilon  )}
 {-s} \sim \frac{\epsilon}{s}. \]
 Therefore, $\lambda_s  = (2s)^{-1} \, + O(e^{-s})$. 
 Using rotational invariance, and conformal convariance, if $r \geq e^{-1}$
 and  $\lambda(r,\theta,s)$
 denotes the bubble measure of bubbles in $r\, \Disk$ rooted
 ant $re^{i\theta} $ that enter $\{|z| \leq e^{-s}\}$, then
 \[   \lambda(r,\theta,s) = r^{-2} \, (2s)^{-1} \, [1 + O(s^{-1})]. \]
 If we compute as in \eqref{jan7.1}, we get \eqref{nov21.6.alt}.  The
 relation \eqref{nov21.6} is done similarly except that we have to worry
 about the winding number of the loop.  This gives a factor of $1/2$. 
 Note that we have
\begin{equation}  \label{jan7.3}
  \mu [\tilde \looper \cap \tilde \looper_s^*] = \frac 12 
 \, \mu [ \tilde \looper_s^*] \, [1+O(s^{-1})].
 \end{equation}

%

  For each unrooted
random walk loop
$\bar l \in \looper_{C_{n}} \setminus \looper_{C_{n-1}}$,
there is a corresponding continuous unrooted loop $\bar l^{(n)}$
in $\Disk$ obtained from linear interpolation
and  Brownian scaling.
We will write  $d(\bar l,\gamma)
\leq \delta$, if we can parametrize and root  the loops $ \bar l ^{(n)}$ and
 $\gamma$  such that the loops
are within $\delta$ in the supremum norm.  In \cite{Jose}
it was shown  that there exists $\alpha > 0$  
and a coupling of  the random walk and Brownian
loop measures in $D$, restricted to loops of diameter at least
$1/e$, so that the total masses agree up to 
 $O(e^{-n \alpha})$ and such that in the coupling,
except for a set of paths of size $O(e^{-n \alpha})$, we
have $d(\bar l,\gamma) < e^{-n \alpha}$.  We would like
to say that in the coupling,  the Brownian loop 
has odd winding number if and only $J(\bar l)$ is odd.
If the loops stay away from the origin, this holds.
However, if the loops are near the origin, it is possible
for the winding numbers of the continuous and the discrete
walks to be different.  However, and this is why we can prove
what we need, it is also true that if a macroscopic
loop (either continuous or discrete) gets close to the origin,
 then it is just about equally likely to have an odd as
an even winding number.  Let us be more precise.

Let $\beta < \alpha$ and let $\looper^n$ denote the
set of loops in $\looper_{C_{n+1}} \setminus \looper _{C_n}$
that intersect $\{|z| \leq e^{-\beta n} \, e^{n+1}\}$
Using the coupling and $\eqref{nov21.6},$ we see that
\[   m (\looper^n)
 =  \mu(\tilde \looper_{\beta n}') + O(n^{-2}) = (\beta n)^{-1} + O(n^{-2}). \]

   Let us split these
paths into two sets: those for which $\dist(0,\gamma) \leq 2e^{-n\alpha}$
and those for which $\dist(0,\gamma) > 2e^{-n \alpha}$.  If
  $\dist(0,\gamma) >    2e^{-n\alpha}$ and $d(\bar l,\gamma) \leq e^{-n \alpha}$, 
  then $J(\bar l) $ is odd if and only if 
the winding number of $\gamma$  is odd.  Therefore
\[   m((\looper_{C_{n+1}} \setminus \looper _{C_n} ) \setminus
 \looper^n) = \mu(\tilde \looper \setminus \tilde \looper'_{\beta n} )
  + O(n^{-2}) . \]
(The error term $O(n^{-2})$ is comparable to the measure of loops $\gamma$
such that $e^{-n \beta} \leq \dist(0,\gamma) \leq  2e^{-n\beta}$.)  
 
A coupling argument can be used to give a random walk analogue
of \eqref{jan7.3},
\[    m \left[\looper^n \cap (\looper_{C_{n+1}}\setminus \looper_{C_{n}})\right]
   = \frac 12 \,   m (\looper^n) \, [1 + o(n^{-1})]. \]   We sketch the proof.
We use the definition of the loop
measure using an enumeration of $\Z^2 = \{z_1,z_2,\ldots\}$
such that $|z_j|$ increases.  Then an unrooted loop
in $\looper^n$ is obtained
from a loop rooted in $C_{n+1} \setminus C_n$. Let
us call the root $z_k$ and so the loops lies
in $V_k = \{z_1,\ldots,z_k\}$.   Let
us stop the walk at the first time it reaches
a point, say $z'$,
 in $\{|z| \leq e^{-\beta n}\, e^{n+1}\}$.  The
remainder of the loop acts like a random 
walk started at $z'$ conditioned
to reach $z_k$ without before leaving $V_k$. 
Let $J'$ denote the number of times such a walk
crosses the half line $\{(1/2) +iy :  y < 0\}.$
We claim that the probability that $J'$
is odd equals $\frac 12 + O(e^{-u n})$ for
some $u > 0 $.   Indeed, we can couple two walks
starting at the point so that  
each walk has the distribution
of random walk conditioned to reach $z_k$ before
leaving $V_k$ and that, except for an event
of probability $O(e^{-\delta})$, the parity of $J'$
is different for the two walks. This uses a standard
technique.  The key estimate is the following.
There exists $c > 0$ such that if $S$ is a simple
random walk starting at $z \in C_{j-1}$ and
$T = \min\{j: S_y \in C_j\}$, then 
for all $w \in \p C_j$ with $\Im(w) > 0$,  
\[    \Prob\{S(T) = w  , \; J' \mbox{ odd} \} \geq
     c \, e^{-j} , \]
\[ \Prob\{S(T) = w  , \; J' \mbox{ even} \} \geq
     c \, e^{-j} . \]
Without the restriction of the parity of $J'$, see,
for example,
\cite[Lemma 6.3.7]{LLimic}.  To get the result with the restriction, we just
note that there is a positive probability of making
a loop in the annulus $C_{j} \setminus C_{j-1}$, and
this increases $J'$ by one. Hence, we can find a coupling
and a $\rho > 0$ such that at each annulus there is
a probability $\rho$ of a successful coupling given that
the walks have not yet been coupled.  Since there
are of order $  {\beta} \,  n$ annuli,
we can couple the processes so that
 the probability
of not being coupled  is $(1-\rho)^{\beta n} =
O(e^{-u n})$ for some $u$.

  From the last two estimates and \eqref{nov21.6}, we see that
\[   \left |\mu(\tilde \looper)- m(\looper_{C_{n+1}}\setminus
     \looper_{C_{n}})\right|  \leq c \, n^{-2}  . \]
       This gives \eqref{nov20.3}.

 \end{proof}
 
 \section{Different boundary conditions}
 
 In this section, we generalize 
  Theorem \ref{combin} to more general
 boundary conditions.  Let $\bar \zeta =(\zeta_1,\zeta_2)$ be
 an ordered pair of distinct points in 
 $\p A_n$, and let $\paths_n(\bar \zeta)$ to be the
 set of nearest neighbor paths $\omega =[\omega_0,\ldots,\omega_k]$
 with $\omega_0 = \zeta_1, \omega_k \in \zeta_2$ and
 $\{\omega_1,\ldots,\omega_{k-1}\} \subset A_n \setminus
 \{0,1\}$.   Let
 $\saws_n^+(\bar \zeta), \saws_n^-(\bar \zeta), 
 \saws_n^*(\bar \zeta)$ be defined similarly  
 with the new boundary condition.   
 As before, if  $\eta \in \saws_n^*(\bar \zeta)$, then
 \[   \hat p_n(\eta) = \sum_{\omega \in\paths_n(\bar \zeta),
 L(\omega) = \eta} \hat p(\omega). \]
 
 Define $R_n(\zeta_j)$ as follows.  Let $S_n$ be a simple random walk
 starting at the origin and let
 \[    T = T_n = \min\{j \geq 1: S_j \in [0,\infty) \cup \p A_n \}. \]
 Then
 \[        R_n(\zeta_j) = I(\zeta_j) \, \Prob\{S_T = \zeta_j\}, \]
 where $I(\zeta_j) = -1$ if $\Re(\zeta_j) > 0, \Im(\zeta_j) < 0$ and
 $I(\zeta_j) = 1$ otherwise.  Note that if
 $\omega \in K_n(\zeta_1)$ with $\omega \cap
 [1,\infty) = \emptyset$, then $I(\zeta_1) =
 (-1)^{J(\zeta_1)}.$ Define
 \[     \Phi_n(\bar \zeta) = \left| R_n(\zeta_1) \, R_n(1 - \zeta_2)
     - R_n(1-\zeta_1) \, R_n(\zeta_2) \right|. \]
The combinatorial identity takes the following form.

\begin{theorem}  \label{jan10theorem}
  If $\bar \zeta =(\zeta_1,\zeta_2 ) \in \p A_n \times \p
  A_n$, then
\begin{equation}  \label{jan9.1}
  \sum_{\eta \in \saws_n^*(\bar \zeta)}  \hat p(\eta)
    =   \frac 14 \,  {Q_{01}(A_n) \, \exp\{2 m(\looper_{A_n})\}}
         \, \Phi_n(\bar \zeta) .
         \end{equation}
\end{theorem}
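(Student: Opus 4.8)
The plan is to retrace the proof of Theorem \ref{combin} verbatim, keeping track of the more general boundary data $\bar\zeta=(\zeta_1,\zeta_2)$ instead of the fixed endpoints on $\{\Re z=-n\}$ and $\{\Re z=n+1\}$. We again introduce $\Lambda_n(\bar\zeta)=\sum_{\omega\in\paths_n(\bar\zeta)}q(\omega)\,Y(L(\omega))$ and compute it two ways. For the first computation nothing changes: the loop-reversal argument shows $\Lambda_n(\bar\zeta)=\sum_{\eta\in\saws_n^*(\bar\zeta)}p(\eta)\,Q_\eta(A_n)$, and the same topological observation (any loop with $J(\bar l)$ odd must cross the line $\{\Re z=1/2\}$ and hence meet any $\eta\in\saws_n^*(\bar\zeta)$, which separates the top and bottom of the box since $\eta$ still joins two points of $\p A_n$ through the edge $\{0,1\}$) gives $\Lambda_n(\bar\zeta)=e^{-2m(\looper_{A_n})}\sum_{\eta\in\saws_n^*(\bar\zeta)}\hat p_n(\eta)$, exactly as in \eqref{nov19.4}. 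The one point that needs a remark is \eqref{nov19.1}: the parity identity $(-1)^{J(\eta)}Y(\eta)=1$ for $\eta\in\saws_n^*(\bar\zeta)$ should be replaced by $(-1)^{J(\eta)}Y(\eta)=\pm1$ with a sign depending only on which ``side'' of the separating curve each of $\zeta_1,\zeta_2$ lies on, i.e. on the factors $I(\zeta_1),I(\zeta_2)$; tracking this sign is what ultimately produces the two terms inside the absolute value in $\Phi_n(\bar\zeta)$.

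The second computation is where the new structure appears. As before, classify $\omega\in\paths_n(\bar\zeta)$ visiting both $0$ and $1$ by the order of the first and last visits $T_0,T_0',T_1,T_1'$. The cases $T_0<T_1,T_0'>T_1'$ and $T_1<T_0\le T_0'<T_1'$ still cancel by loop reversal. For the mixed cases $T_0>T_1,T_0'>T_1'$ (and its mirror), the earlier argument reflected $\omega^-$ about the real axis to get a sign flip in $J$; now $\omega^-$ runs from $\zeta_1$ to a point of $\{0,1\}$, and reflecting about $\{\Im z=0\}$ is no longer measure-preserving on $\paths_n(\bar\zeta)$ unless we also move $\zeta_1$. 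The right bookkeeping is to pair up contributions across the {\em two} reflections (about the real axis, sending the problem to the pair with endpoint $1-\zeta_1$ resp.\ $1-\zeta_2$), which is precisely why $\Phi_n(\bar\zeta)$ is a difference of two products $R_n(\zeta_1)R_n(1-\zeta_2)$ and $R_n(1-\zeta_1)R_n(\zeta_2)$ rather than a single square $R_n^2/4$. After this cancellation one is left with $\paths_n^1(\bar\zeta)$ (the $T_0<T_1,T_0'<T_1'$ case); as before only $\omega'=[0,1]$ survives in $Y(\omega')$, and the loop $l_0$ at $0$ reverses away, leaving the decomposition $\omega=\omega^-\oplus l_0\oplus[0,1]\oplus l_1\oplus\omega^+$ with $\omega^-$ a path from $\zeta_1$ to $0$, $\omega^+$ from $1$ to $\zeta_2$, the loops confined to $A_n$ with $l_1$ avoiding $0$.

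The sum over $l_0,l_1$ again factors as $g_{A_n}(0,0)\,g_{A_n\setminus\{0\}}(1,1)=Q_{01}(A_n)$, independently of the boundary pieces. The sum over the pieces $\omega^-,\omega^+$ with the residual $(-1)^{J}$ weight is now what produces $\Phi_n(\bar\zeta)$: splitting $\omega^-$ (and $\omega^+$) at the first visit to the positive real axis and reflecting the tail as in the original proof, the surviving configurations are those with $\omega^-\cap[1,\infty)=\emptyset$ and $\omega^+\cap(-\infty,0]=\emptyset$, whose $J$-weight equals $I(\zeta_1)$ resp.\ $I(\zeta_2)$ by the remark in the text; the signed count of such $\omega^-$ from $\zeta_1$ to $0$ is $R_n(\zeta_1)$ by definition (with the mirrored family contributing $R_n(1-\zeta_1)$), and similarly for $\omega^+$ reflected about $\{\Re z=1/2\}$. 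Assembling, $\Lambda_n(\bar\zeta)=\tfrac14 Q_{01}(A_n)\,[\,R_n(\zeta_1)R_n(1-\zeta_2)-R_n(1-\zeta_1)R_n(\zeta_2)\,]$ up to an overall sign, and comparing with the first computation and taking absolute values yields \eqref{jan9.1}. The main obstacle I anticipate is the sign bookkeeping in the reflection/cancellation steps: one must check that the two reflections (about $\{\Im z=0\}$ and about $\{\Re z=1/2\}$) interact so that exactly the combination $R_n(\zeta_1)R_n(1-\zeta_2)-R_n(1-\zeta_1)R_n(\zeta_2)$ emerges, with the correct relative minus sign, and that the absolute value in $\Phi_n(\bar\zeta)$ absorbs the (otherwise annoying) global sign that depends on the positions of $\zeta_1,\zeta_2$.
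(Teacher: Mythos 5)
Your strategy is the paper's: rerun the proof of Theorem \ref{combin} with general endpoints, keeping the first computation $\Lambda_n(\bar\zeta)=e^{-2m(\looper_{A_n})}\sum_{\eta}\hat p_n(\eta)$ intact. (One small correction there: the residual sign in $(-1)^{J(\eta)}Y(\eta)=\pm 1$ is governed by the cyclic order of $\zeta_1,\zeta_2$ and the corner $\tfrac12-ni$ on $\p A_n$, not by the factors $I(\zeta_1),I(\zeta_2)$; the paper normalizes it to $+1$ by choosing the order of the pair, and the absolute value in $\Phi_n$ makes the issue harmless.)

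The genuine gap is in your treatment of the case $T_0>T_1,\ T_0'>T_1'$. You correctly observe that the reflection that killed this case in Theorem \ref{combin} is no longer available, but you then invoke an unspecified ``pairing across two reflections'' and conclude that ``after this cancellation one is left with $\paths_n^1(\bar\zeta)$.'' That cannot be right: if only the $T_0<T_1,\ T_0'<T_1'$ paths survived, the computation would yield a single product, not the difference of two products. In the paper this case does \emph{not} cancel; it is kept and evaluated on its own terms. One reduces to two classes: $\paths^5$ (where $T_0<T_1$ and $\omega_{T_0'+1}=1$, so the surviving directed edge is $[0,1]$ and $Y=+1$) and $\paths^6$ (where $T_1<T_0$ and $\omega_{T_0-1}=1$, directed edge $[1,0]$, $Y=-1$), giving $\Lambda=\sum_{\paths^5}(-1)^{J}p-\sum_{\paths^6}(-1)^{J}p$. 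Each class is decomposed exactly as you describe for the first one --- boundary piece into one of $\{0,1\}$, loop, directed edge, loop, boundary piece out --- the loop sums factor as $Q_{01}(A_n)$ in both, and the boundary pieces give $R_n(\zeta_1)R_n(1-\zeta_2)$ for $\paths^5$ and $R_n(1-\zeta_1)R_n(\zeta_2)$ for $\paths^6$ (the latter because the piece from $\zeta_1$ now ends at $1$ rather than $0$, so one reflects it about $\{\Re z=1/2\}$). The relative minus sign is simply $Y([1,0])=-1$; it is not produced by any reflection pairing. With this replacement for your mixed-case argument, the rest of your outline matches the paper's proof.
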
  

Note that the factor  ${Q_{01}(A_n) \, \exp\{2 m(\looper_{A_n})\}}/4$
does not depend on $\bar \zeta$. Hence the theorem implies that
if $\bar \zeta, \bar \zeta' \in \p A_n\times \p A_n$,
\[ \frac{ \sum_{\eta \in \saws_n^*(\bar \zeta)}  \hat p(\eta)}
    { \sum_{\eta \in \saws_n^*(\bar \zeta')}  \hat p(\eta)}
      = \frac{\Phi_n(\bar \zeta)} {\Phi_n(\bar \zeta')}.\]

\begin{proof} Let $J(\omega),Y(\omega)$ be as in the introduction.
Both sides of \eqref{jan9.1} vanish if $\zeta_1 = \zeta_2$
so we assume that $\zeta_1 \neq \zeta_2$.
Both sides of \eqref{jan9.1} are invariant
under the transformation $(\zeta_1,\zeta_2) \mapsto (\zeta_2,\zeta_1).$
Let 
\begin{equation}  \label{Dndef}
D_n  = \{x+iy \in \C: -n < x < n+1, -n < y < n\}.
\end{equation}
The boundary $\p D_n$ is a rectangle.  Without loss of generality
we will assume that the order $(\zeta_1,\zeta_2)$ is chosen so that  the point
$\frac 12 -ni$ is on the arc of $\p D_n$
 from $\zeta_1$ to $\zeta_2$ in the
counterclockwise direction.   If $\eta \in \saws_n^*(\bar \zeta)  $,
  considered as a simple curve by linear
interpolation, then $D_n \setminus \eta$ consists of two components,
$D^+,D^-$ where $D^-$ denotes the component containing $\frac 12 -ni$ 
on its boundary.  By our choice of order of $\zeta_1,\zeta_2$, as we
traverse $\eta$ from $\zeta_1$ to $\zeta_2$,
 the right-hand side of $\eta$ is on 
 $\p D^-$ and the left-hand side is  $\p D^+$.   
Using this, we can use the topological argument as in \eqref{nov19.1}
to conclude that
\begin{equation}  \label{jan9.4}
(-1)^{J(\eta)} \, Y(\eta) = 1 \;\;\;\; \mbox{if} \;\;\;\;
  \eta \in \saws_n^*(\bar \zeta).
  \end{equation}

Let
\begin{equation}
\label{jan9.5}
   \Lambda  = \Lambda_n(\bar \zeta)
     = \sum_{\omega \in \paths_n(\bar \zeta)} (-1)^{J(\omega)}
      \, p(\omega) \, Y(\omega) .
      \end{equation}
  We claim that
\[ \Lambda = \exp \left\{-2 m(\looper_{A_n}) \right\}
   \sum_{\eta \in \saws_n^*(\bar \zeta)} \hat p_n (\eta) . \]
   Indeed, given \eqref{jan9.4},
   the proof is identical to that  of \eqref{nov19.4}.
  Hence to prove the theorem, it suffices to prove that
 \begin{equation}  \label{jan9.3}
         \Lambda = \frac{Q_{01}(A_n)}{4} \, \Phi_n(\bar \zeta).
 \end{equation}

As in the proof of Theorem \ref{combin}, we note that if
$\omega$ does not contain the unordered edge $\{0,1\}$,
then $Y(\omega) = 0$.  Hence the sum in \eqref{jan9.5}
is over $\omega$ such that $T_0,T_0',T_1,T_1'$, as
defined in the proof of Theorem \ref{combin},
 are
well defined.   As in that proof, if $T_0 < T_1$
and $T_0' > T_1$,  we can write $\omega$ as
in \eqref{jan9.6}.  By considering the path with the reversed
loop, we see that the sum over $\omega$ with
$T_0 < T_1$
and $T_0' > T_1$ equals zero.  Similarly, the
sum over $\omega$ with $T_0 > T_1, T_0' < T_1'$
is zero.   Hence we need only consider the sum over
paths with $T_0 < T_1, T_0' < T_1'$ or
$T_0 > T_1, T_0' > T_1'$.  However, unlike in the proof
of Theorem \ref{combin}, the sum over $\omega$ with
$T_0 > T_1, T_0' > T_1'$ does not vanish.

Let $\paths^5 = \paths^5_n(\bar \zeta)$ be the set of
$\omega \in \paths_n(\bar \zeta)$ such that $T_0 < T_1$
and $S_{T_0'+1} = 1$.  Let $\paths^6 = \paths^6_n(\bar \zeta)$
be the set of $\omega \in \paths_n(\bar \zeta)$ such that $T_1 < T_0$
and $S_{T_0-1} = 1$.   Then the argument leading to \eqref{nov20.4}
in this case gives
\[  \Lambda = \sum_{\omega \in \paths^5} (-1)^{J(\omega)}
 \, p(\omega) - \sum_{\omega \in \paths^6} (-1)^{J(\omega)}
 \, p(\omega) .\]
 
 If $\omega=[\omega_0,\ldots,\omega_\tau]
  \in \paths^5$, we write
 \[   \omega = \omega_- \oplus l_0 \oplus [0,1] \oplus l_1
   \oplus \omega_+ \] where
   \[ \omega_- = [\omega_0,\ldots,\omega_{T_0}], \;\;\;\;
     l_0 = [\omega_{T_0},\ldots,\omega_{T_0'}], \]
     \[ l_1 = [\omega_{T_0'+1},\ldots,\omega_{T_1'}], \;\;\;\;
       \omega_+ = [\omega_{T_1'},\ldots,\omega_\tau].\]
  Note that $\omega_-$ is a path from $\zeta_1$ to $0$;
  $l_0$ is a loop rooted at $0$; $l_1$ is a loop rooted at $1$
  that avoids the origin; and $\omega_=$ is a path from $1$
  to $\zeta_2$.  The points in $l_0,l_1$ must lie in $A_n$.
  The points in $\omega_-,\omega_+$ other than the endpoints
  must lie in $A_n \setminus \{0,1\}$.   Let $\paths^6 = \paths^6_n(\zeta_1)$
  be the set of nearest neighbor paths $\omega = [\omega_0,\ldots,\omega_k]$
  with $\omega_0 = \zeta_1, \omega_k = 0, \{\omega_1,\ldots,\omega_{k-1}\}
   \subset A_n \setminus \{0,1\}$.  Let $\paths^7 =
   \paths^7_n(\zeta_2)$
    be the set of nearest neighbor paths $\omega = [\omega_0,\ldots,\omega_k]$
  with $\omega_0 =  1, \omega_k = \zeta_2, \{\omega_1,\ldots,\omega_{k-1}\}
   \subset A_n \setminus \{0,1\}$.  Then
   \[ \sum_{\omega \in \paths^5} (-1)^{J(\omega)}
 \, p(\omega) = \frac{Q_{0,1}(A_n)}{4} \, \left[\sum_{\omega \in \paths^6
  } (-1)^{J(\omega)} \, p(\omega)\right] \, \left[\sum_{\omega \in \paths^7}
   (-1)^{J(\omega)}\, p(\omega)\right]. \]
   
   If $\omega = [\omega_0,\ldots,\omega_k] \in \paths^6$, let $j$ be the first
   index (if it exists) such that $\omega_j \in [1,\infty)$.  If $j$ exists, we
   can write
   \[   \omega = [\omega_0,\ldots,\omega_j] \oplus [\omega_j,\ldots,\omega_k].\]
   The path $\omega'= [\omega_j,\ldots,\omega_k]$ is a path from the positive real
   line to the origin.  If $\tilde \omega'$ denotes the reflection of $\omega$
   about the real axis, then $J(\omega') + J(\tilde \omega')$ is odd.  Hence,
   these terms will cancel in the sum.  Therefore,
  \[  \sum_{\omega \in \paths^6
  } (-1)^{J(\omega)} \, p(\omega) = \sum_{\omega \in \paths^8
  } (-1)^{J(\omega)} \, p(\omega), \]
where $\paths^8$ denotes the set of paths in $\paths^6$ that do not
intersect $[1,\infty)$.   If $\omega \in \paths^8$, then $\omega$
cannot hit the positive real line, so we can see that $J(\omega)$
is even unless $\omega$ lies in
the quadrant $\{ \Re(\zeta_1) > 0,\Im(\zeta_1) < 0\}$ in which case
$J(\omega)$ is odd.  
Therefore,
\[ \sum_{\omega \in \paths^8
  } (-1)^{J(\omega)} \, p(\omega) = R_n(\zeta_1). \]
Using symmetry about the line $\{\Re(z) = 1/2\}$, we can see that
\[   \sum_{\omega \in \paths^7}
   (-1)^{J(\omega)}\, p(\omega) = R_n(1 - \zeta_2). \]
Similarly, we see that 
\[ 
\sum_{\omega \in \paths^6} (-1)^{J(\omega)}
 \, p(\omega) =   \frac{Q_{01}(A_n)}{4} \,R_n(\zeta_2) \, R_n(1-\zeta_1).\]
\end{proof}

\section{Scaling limit}

The scaling limit of the loop-erased walk is known in some sense to
be the Schramm-Loewner evolution with parameter $2$ ($SLE_2$),
see \cite{LSW}. 
Here we would like to give a stronger conjecture about this convergence
that has not been proved and then show how the estimates here
could be used to  prove one case of 
this conjecture.  Although we could probably given the details
of the proof, we choose not to bother since it is only a start to the main
result we hope to obtain later.

We start by reviewing $SLE_\kappa$ in the simple curve case.
We take a ``partition function'' view with curves parametrized
by natural parametrization as outlined in \cite{Annulus}.
Suppose $0 < \kappa \leq 4$ and let $a = 2/\kappa \in [1/2,\infty)$.
We will consider bounded simply connected domains $D \subset \C$
containing the origin with piecewise anayltic boundaries.  We will consider
what is sometimes called two-sided radial $SLE_\kappa$, or, more
convenient for use, as chordal $SLE_\kappa$ from $z$ to $w$ 
conditioned to go through the origin.    This is a finite
measure on simple curves
$\gamma:[0,T_\gamma] \rightarrow \overline D$  that go through
the origin with $\gamma(0) = z, \gamma(T_\gamma) = w, \gamma(0,T_z)
\subset  D$.  We denote this measure by $\mu_D(0;z,w)$ and write
it as
\[       \mu_D(0;z,w)=  \tmass_D(0;z,w) \, \mu_D^\#(0,z,w) \]
where $\tmass_D(0;z,w)$ denotes the total mass of the  measure
and  $\mu_D^\#(0,z,w) $ is a probability measure.   The measure is
supported on curves of Hausdorff dimension $d = d_\kappa =
1 + \frac \kappa 8$.   The family
of  measures satisfies a conformal
covariance property that we now describe.  Suppose $F:D \rightarrow F(D)$
is a conformal transformation with $F(0) = 0$ and such that 
$\p F(D)$ is locally analytic near $F(z),F(w)$.  If $\gamma$ is
a curve, define the curve $F \circ \gamma$ to be the image of
the curve reparametrized so that the time to traverse
$F(\gamma[r,s])$ is
\[            \int_r^s |F'(t)|^d \, dt . \]
If $\nu$ is a measure on curves on $D$, we define $F \circ \nu$
to be the measure on cureves on $F(D)$,
\[      F\circ \nu(V) = \nu\{\gamma: F \circ \gamma \in V \}. \]
Then the conformal covariance rule is
\[         F \circ  \mu_D(0;z,w) = |F'(z)|^b \, |F'(w)|^b \,
   |F'(0)|^{2-d} \,  \mu_{F(D)} (F(0);F(z), F(w)), \]
   where $b = 3a-1 = \frac{6 - \kappa}{2\kappa}$. 
  This rule can be considered as a combination of two rules,
\begin{equation}  \label{tmassrule}
   \tmass_D(0;z,w) = |F'(z)|^b \, |F'(w)|^b \,
   |F'(0)|^{2-d} \,\tmass_{F(D)}(0;F(z),F(w)), 
   \end{equation}
   \[       F \circ  \mu_D^\#(0;z,w) =  \mu_{F(D)}^\# (F(0);F(z), F(w)). \] 
   For each $\kappa$ there is a unique such family
   of measures  up to two arbitrary multiplicative constants.  (If $\mu_D$
   satisfies  the conditions above, then so does $c \, \mu_D$, and also the
   measure obtained by changing the unit of time,
   that is,  replacing $\gamma(s), 0 \leq s \leq t$
   with $\tilde \gamma (s) = \gamma(sr), 0 \leq s \leq t/r$.) 
For simply connected $D$,  
the total mass is given (up to multiplicative constant) by
\[    \tmass_D(0;z,w) = H_{\p D}(z,w)^b \, G_{D}(0;z,w)\]
where $G_D(0;z,w)$ denotes the $SLE_\kappa$ Green's function
giving the normalized probability that a chordal $SLE_\kappa$ path
from $z$ to $w$ goes through the origin.  Up to multiplicative
constant, 
\[            G_\Disk(0;1,e^{2i\theta}) = |\sin \theta|^{4a-1} . \]
and for other domains can be determined by
\[   G_D(0;z,w) = |F'(0)|^{2-d} \, G_{F(D)}(F(z),F(w)). \]

Loop-erased walk corresponds to $\kappa = 2\, (a=1,b=1,d = 5/4)$,
and for the remainder of this section we fix $\kappa = 2$. 
If $U,V$ are closed subarcs of $\p D$, we define
\[       \tmass_D(0;U,V) = \int_U \int_V \tmass_D(0;z,w) \, |dz|
  \, |dw|, \]
  \[       \mu_D(0;U,V) = \int_U \int_V \mu_D(0;z,w) \, |dz|
  \, |dw|. \]
  Since $b=1$, the scaling rule \eqref{tmassrule} can be used to see
  that
  \[     \tmass_D(0;U,V) = |F'(0)|^{3/4} \, \tmass_{F(D)}(0;F(U),
  F(V)). \]
  Given this, we can extend the definition to domains with
  rough boundaries.  
  
  We will now state a {\em conjecture} about the convergence of
  loop-erased walk to $SLE_2$.  We will state it for simply connected domains,
  but we expect it to be true for finitely connected domains as well.
  \begin{itemize}
  \item  Suppose $D$ is a bounded, simply connected domain
  containing the origin whose boundary $\p D$ is a Jordan
  curve.
  \item  Let $U,V$ be disjoint, closed subarcs of $\p D$.
  \item  Let $\sdom = \{x+iy \in \C: |x|,|y| \leq 1/2\}$
  and if $z \in \Z\times i \Z$, let  
  $\sdom_z = z + \sdom$. 
  \item  Let $A_n = A_n(D)$ be the connected component containing
  the origin of the set of $z \in \Z \times i \Z$ such that
  $\sdom_z \subset nD$.
  \item  Let $U_n$ be the set of $z \in \p A_n$ such that
  $\sdom_z \cap nU \neq \emptyset$.  Define $V_n$ simiarly.
  \item  Let $\paths_n = \paths_n(D,U,V)$ denote the set of
  nearest neighbor paths $\omega = [\omega_0,\ldots,\omega_k]$
  with $\omega_0 \in U_n, \omega_k \in V_n$ and $\{\omega_1,
  \ldots,\omega_{k-1}\} \subset A_n$.  Let $\saws_n^* =
   \saws_n^*(D,U,V)$ denote the set of self-avoiding paths in $\paths_n$
   that contain the unordered edge $\{0,1\}$.  As before, let
   \[   \hat p_n(\eta) = \sum_{\omega \in \paths_n, L(\omega)
    = \eta} p(\omega).\]
  \item If $\eta = [\eta_0,\ldots,\eta_k] \in \saws_n^* $,
  define the scaled path $\eta^{n}(t)$ by
  \[        \eta^{(n)}(jn^{-5/4}) = \eta_j/n, \;\;\; j=0,1,\ldots,n \]
  and linear interpolation in between.
  Let $\mu^{(n)} = \mu^{(n)}_D(0;U,V)$ denote the measure of 
  paths that gives measure $ \hat p_n(\eta)$ to $\eta^{(n)}$ for
  each $\eta \in  \saws_n^*(D,U,V)$.
  \end{itemize}
  
  \begin{conjecture} We can choose the two arbitrary
  constants in the definition of the measure $\mu_D$
   such for every $D,U,V$ as above,
\[    \lim_{n \rightarrow \infty} n^{3/4} \, \mu^{(n)} =
       \mu_D(0;U,V) . \]
       In particular, 
\begin{equation}  \label{jan20.5}
 \lim_{n \rightarrow \infty} \sum_{\eta \in \saws_n}
   \hat p_n(\eta) = \tmass_D(0;U,V). 
 \end{equation}
   \end{conjecture}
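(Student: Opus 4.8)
The plan is not to prove the conjecture in full --- that is out of reach --- but to obtain the one case that the identities and estimates above already deliver, namely the total-mass statement \eqref{jan20.5} in the form
\[
 \lim_{n\to\infty}n^{3/4}\sum_{\eta\in\saws_n^*}\hat p_n(\eta)=\tmass_D(0;U,V),
\]
first for the configuration that produces the quantity of Theorem~\ref{combin} (the domain $D$ being the scaling limit $\{-1<\Re z<1,\ -1<\Im z<1\}$ of the regions $A_n$, and $U,V$ its left and right sides), and then for all admissible $(D,U,V)$ by conformal covariance. The starting point is the identity
\[
 4\sum_{\eta\in\saws_n^*}\hat p_n(\eta)=Q_{01}(A_n)\,R_n^2\,\exp\{2m(\looper_{A_n})\}
\]
of Theorem~\ref{combin}, and the first task is to show that each of the three factors, suitably rescaled, converges.

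For the first factor this is \eqref{nov25.1}: $Q_{01}(A_n)\to Q_{01}\in(0,\infty)$. For the second I would upgrade \eqref{nov25.2} to $n^{1/2}R_n\to c_R\in(0,\infty)$; since $R_n=\Prob\{\Re(S_\tau)=-n\}$ with $\tau$ the first hitting time of $[0,\infty)\cup\p A_n$, this is a routine consequence of the invariance principle for simple random walk together with the boundary (Beurling-type) estimates of \cite{LLimic}, with $c_R$ the corresponding Brownian harmonic-measure quantity for the slit square. For the third factor I would strengthen the Proposition of Section~\ref{rwloopsec} to the existence of $\lim_{n\to\infty}\bigl[m(\looper_{A_n})-\tfrac18\log n\bigr]=:\mathfrak a$. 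That Proposition already establishes the corresponding statement for the disks $C_n$, via $\sum_n\bigl|m(\looper_{C_{n+1}}\setminus\looper_{C_n})-\tfrac18\bigr|<\infty$, and the same scheme applies to the squares $A_n$, using (i) the coupling of the random walk and Brownian loop measures from \cite{Jose}, (ii) the conformal invariance of the Brownian loop measure, which reduces the mass of the loops of a prescribed scale inside the square that have odd winding number about the origin to the disk computation $\mu(\tilde\looper)=1/8$ of the Proposition, and (iii) the coupling used there to show that loops approaching the origin are about equally likely to have even or odd winding number. Combining the three factors,
\begin{align*}
 n^{3/4}\cdot4\sum_{\eta\in\saws_n^*}\hat p_n(\eta)
 &=Q_{01}(A_n)\,\bigl(n^{1/2}R_n\bigr)^2\,n^{-1/4}\exp\{2m(\looper_{A_n})\}\\
 &\longrightarrow Q_{01}\,c_R^2\,e^{2\mathfrak a}=:c_D\in(0,\infty),
\end{align*}
which already yields the existence of $\lim_n n^{3/4}\sum_{\eta\in\saws_n^*}\hat p_n(\eta)=c_D/4$ foreshadowed in the remark following \eqref{nov20.1}.

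It then remains to identify $c_D/4$ with $\tmass_D(0;U,V)$ for an admissible choice of the two free constants of $\mu_D$. I would fix the time-rescaling constant by declaring the natural parametrization normalized so that the exponent in $jn^{-5/4}\mapsto\eta_j/n$ (forced by $d=5/4$) is the right one, and fix the overall multiplicative constant by $\tmass_D(0;U,V):=c_D/4$ for this particular square. To see that a single such choice works for every $(D',U',V')$ --- which is the content of \eqref{jan20.5} --- I would show that $c_{D'}/\tmass_{D'}(0;U',V')$ does not depend on $(D',U',V')$. For varying boundary data on a fixed domain this follows from Theorem~\ref{jan10theorem}: the ratio $\sum_\eta\hat p(\eta;\bar\zeta)/\sum_\eta\hat p(\eta;\bar\zeta')$ equals $\Phi_n(\bar\zeta)/\Phi_n(\bar\zeta')$, the walks defining the $R_n(\zeta)$ that make up $\Phi_n$ converge (after the same rescaling as in the second paragraph) to Brownian boundary Poisson kernels in the slit domain, and one checks that the limiting ratio equals that of the boundary factors $H_{\p D'}(\zeta_1,\zeta_2)\,G_{D'}(0;\zeta_1,\zeta_2)=\tmass_{D'}(0;\zeta_1,\zeta_2)$ (this uses $b=1$); integrating over arcs gives the arc version. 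For varying domains one uses conformal covariance: under a conformal map $F$ with $F(0)=0$, both $\tmass_{D'}(0;U',V')$ --- by \eqref{tmassrule} with $b=1$ --- and the rescaled scaling limit of the loop-erased walk total mass --- by the known convergence to $SLE_2$ of \cite{LSW} --- transform by the factor $|F'(0)|^{3/4}$, so the ratio is conformally invariant, hence (every Jordan domain being conformally equivalent to the square) constant. This gives \eqref{jan20.5}.

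The hard part will be steps (ii)--(iii) of the second paragraph: running the \cite{Jose} coupling and the Brownian loop measure computation on a domain with corners and with the winding point in the interior. I expect this to be technical rather than deep --- the corners are handled by comparison with a smooth domain of comparable scale, since the loops that tell the two apart localize near the corners and contribute only a convergent $O(1)$ correction, and the interior winding point is treated exactly as in the Proposition. A secondary difficulty is the constant-matching of the third paragraph: one must check that the two free constants genuinely suffice and that the convergence of loop-erased walk to $SLE_2$ is available in the conformally covariant, finite-measure form, with the correct behavior near the boundary arcs, that the argument requires. Finally, I would not attempt the full statement $n^{3/4}\mu^{(n)}\to\mu_D(0;U,V)$ as measures on curves: that needs convergence of loop-erased walk to $SLE_2$ in the \emph{natural parametrization} (\cite{LR,LS}), which is precisely the open problem these estimates are designed to feed into, so only the total-mass conclusion \eqref{jan20.5} would be claimed.
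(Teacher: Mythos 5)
The statement you are addressing is presented in the paper as a \emph{conjecture}: the author says explicitly that it ``has not been proved,'' and the final section only checks that the $\bar\zeta$-dependence supplied by Theorem \ref{jan10theorem} (through the factor $\Phi_n(\bar\zeta)$, whose harmonic-measure ingredients are matched heuristically against $H_{\p D}\,G_D$) is \emph{consistent} with \eqref{jan20.5} for the square. So there is no proof in the paper to compare against, and your proposal, which aims to actually establish \eqref{jan20.5}, must be judged on its own; it does not close the gap. The decisive problem is your treatment of general domains. You argue that the rescaled total mass $n^{3/4}\sum_{\eta\in\saws_n^*(D,U,V)}\hat p_n(\eta)$ transforms by $|F'(0)|^{3/4}$ under a conformal map because of ``the known convergence to $SLE_2$'' of \cite{LSW}. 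That convergence is a statement about the \emph{normalized} curves modulo reparametrization; it gives no control on the non-normalized mass, i.e., on the sharp asymptotics of the probability that the loop-erased walk uses the edge $\{0,1\}$ in a general domain. Asserting that this quantity has a conformally covariant limit is essentially the conjecture itself, so the step is circular. Nor can you fall back on the combinatorial identity in a general domain: the proofs of Theorems \ref{combin} and \ref{jan10theorem} cancel terms by reflecting subpaths about the real axis and about $\{\Re(z)=1/2\}$, which requires the lattice domain to be invariant under those reflections; the approximations $A_n(D)$ of a general Jordan domain are not, so no analogue of $\Lambda_n=\frac14 R_n^2\,Q_{01}(A_n)$ is available there.

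Even restricted to the square, several of your ``routine'' upgrades are exactly the points the paper declines to carry out. The existence of $\lim_n\bigl[m(\looper_{A_n})-\tfrac18\log n\bigr]$ is proved in Section \ref{rwloopsec} only for the disks $C_n$, with merely a bounded difference for $A_n$; transferring the telescoping argument to squares, and proving $n^{1/2}R_n(\zeta_n)\to c\,|F'(e^{2i\theta})|^{-1}\sin\theta$ with the correct boundary behavior near the tip of the slit $[0,\infty)$, are nontrivial (the paper itself only writes ``it can be proved''), and the author states that he does not know how to compute the value of the limit \eqref{nov20.2}, which enters your constant $c_D$ through $e^{2\mathfrak a}$. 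Your instinct to fix the free multiplicative constant of $\mu_D$ by one square configuration and propagate to other boundary points via the ratio identity of Theorem \ref{jan10theorem} is sound and is essentially what the paper's heuristic does; but without a non-circular argument for changing the domain, the proposal yields at most (modulo the limit-existence issues) the case of the square with boundary arcs, not \eqref{jan20.5} in the generality claimed.
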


We will show how Theorem \ref{jan10theorem} can be interpreted
as one case of the conjecture \eqref{jan20.5}. 
 Let $\Disk$ denote the unit
disk in $\C$ and $D$ the square
\[       D = \{x+iy: |x|,|y| < 1 \}. \] If
  $z \in \p D$, let $z_n$ be the corresponding point in $\p A_n$.
  (Approximately $z_n = n z$, but we need to round to the nearest
  integer and compensate for the fact that $A_n$ is not exactly
  a square centered at the origin.  These are very small errors.)
  Then, we would expect from \eqref{jan9.1} that
\begin{equation}  \label{jan10.2}
   n\, \Phi_n(z_n,w_n ) \sim c \, \tmass_D(0;z,w)
    \sim   c'\,   H_{\p D} ( z,w) \,
  G_D(0;z,w)  .
  \end{equation}
We will show why this holds.   We will not calculate the
right-hand side explicitly (we could by computing the map $F$ below,
but it will not be necessary).  However, we do know that
\[ H_{\p \Disk} ( e^{2i\theta_1}, e^{2i\theta_2}) \,
  G_D(0;e^{2i\theta_1}, e^{2i\theta_2})   \sim c \, [\sin (\theta_1-\theta_2)]^{-2}
   \, |\sin (\theta_1 - \theta_2)|^3 = c \, |\sin(\theta_1 - \theta_2)|.
   \]

There is a unique conformal transformation
\[            F: \Disk \rightarrow D, \]
with $F(-1) = -1, F(0) = 0, F(1) = 1$.   One way to show it
exists is to define $F: \Disk \cap \Half \rightarrow
D \cap \Half$ to be the unique conformal transformation
that fixes the boundary points $-1,0,1$, and then to extend $F$
to $\Disk$ by Schwarz reflection. Let $\Disk^+
= \Disk \setminus (-1,0], \Disk^- =\Disk \setminus [0,1),
D^+ = D \setminus (-1,0], D^- = D \setminus [0,1)$, and note
that $F$ conformally maps $\Disk^+$ onto $D^+$ and
$\Disk^-$ onto $D^-$.   Using conformal invariance, we see
that
  \[          G_D(0;F(e^{2i\theta_1}),F(e^{2i\theta_2})  )=
          |F'(0)|^{-3/4} \,   G_\Disk(0;e^{2i\theta_1},e^{2i\theta_2}) =
               c_3 \,  |\sin(\theta_2-\theta_1)|^3, \]
  \[   H_{\p D} (F(e^{2i\theta_1}),F(e^{2i\theta_2}))\,
  G_D(0;F(e^{2i\theta_1}),F(e^{2i\theta_2})  ) = \hspace{1in}\]
\begin{equation}  \label{jan10.3}
\hspace{1in}   c_4\, |F'(e^{2i\theta_1})|^{-1} \, |F'(e^{2i\theta_2})|^{-1}\, 
  |\sin(\theta_2-\theta_1)| . 
  \end{equation}

Let $g_+$ denote the Poisson kernel in the slit domain $\Disk^+$
defined as follows.  Start a Brownian motion near the origin in
$\Disk^+$ and condition the Brownian motion to exit $\Disk^+$ 
on the unit circle $\p \Disk$.  Then $g_+$ is the conditional
density of the exit distribution (taken in the limit as the 
starting point approaches the origin).  It is not difficult to compute
$g_+$ using conformal invariance
\[         g_+(e^{2i\theta}) = c_1 \, \sin \theta . \]
(In this section, we will use $c_1,c_2,\ldots$ for   absolute
constants whose values could be computed, but we will not bother
to.)  If $g_-$ is the corresponding density in $\Disk^-$,
\[        g_-(e^{2i\theta}) =  c_1 \, \cos \theta . \]
Let $\hat g_+, \hat g_-$ denote the corresponding densities
for the slit domains $D^+, D^-$.  Conformal covariance implies
that
\begin{equation}  \label{jan10.1}
              \hat g_+(F(e^{i\theta})) =
 c_2 \, |F'(e^{2i\theta})|^{-1} \, \sin \theta, \;\;\;\;
      \hat g_-(F(e^{i\theta})) =
 c_2 \, |F'(e^{2i\theta})|^{-1} \, \cos \theta.
 \end{equation}

%
%
%
 Suppose that $z = F(e^{2i\theta_1}), w = F(e^{2i\theta_2}). $  
The relation \eqref{jan10.1} suggests (and, in fact, it can be proved)
that
\[       | R_n(z_n) | \sim c_6 \, n^{-1/2} \, |F'(e^{2i\theta_1})|^{-1}
 \,  \sin \theta_1, \;\;\;\; |R_n(1-z_n)| \sim
    c_6 \, n^{-1/2} \, |F'(e^{2i\theta_1})|^{-1}
 \, \cos \theta_1 . \]
 (Here we use the fact that $|F'(x+iy) = |F'(-x+iy)|$.)
Let us consider two cases.  First, suppose that $z,w$ are both
in the upper half plane, that is, $0 \leq \theta_2 < \theta_1 \leq \pi/2$.
Then, 
\begin{eqnarray*}
n\, \Phi_n(z_n,w_n) & = &  n \, \left[R_n(z_n) \, R_n(1-w_n)
  - R_n(1-z_n ) \, R_n(w_n) \right]\\
   & \sim & c_6^2  |F'(e^{2i\theta_1})|^{-1} \,
   |F'(e^{2i\theta_1})|^{-1} \, \left[\sin \theta_1 \, \cos \theta_2
    - \cos \theta_1 \, \sin \theta_2\right]\\
   & = &  c_6^2  |F'(e^{2i\theta_1})|^{-1} \,
   |F'(e^{2i\theta_1})|^{-1} \, \sin(\theta_1-\theta_2) 
  \end{eqnarray*}
As a second case, suppose that
$z = F(e^{2i\theta_1}), w = F(e^{-2i\theta_2}). $   
where we also assume that $\theta_2 \leq \pi/4$ so that
  $w_n$ is in the southeast
quadrant.  Then $R_n(w_n) < 0, R_n(1-w_n) > 0$,
and
\begin{eqnarray*}
n\, \Phi_n(z_n,w_n) & = &  n \, \left[R_n(z_n) \, R_n(1-w_n)
  - R_n(1-z_n ) \, R_n(w_n) \right]\\
   & \sim & c_6^2  |F'(e^{2i\theta_1})|^{-1} \,
   |F'(e^{2i\theta_1})|^{-1} \, \left[\sin \theta_1 \, \cos \theta_2
 +\cos \theta_1 \, \sin \theta_2\right]\\
   & = &  c_6^2  |F'(e^{2i\theta_1})|^{-1} \,
   |F'(e^{2i\theta_1})|^{-1} \, \sin(\theta_1 +\theta_2) 
  \end{eqnarray*}
If we compare this with \eqref{jan10.3}, we see that we
get the prediction \eqref{jan10.2}.

\end{document}